\newtheorem{thm}{Theorem}
\newtheorem{lem}[thm]{Lemma}
\newtheorem{prop}[thm]{Proposition}
\newtheorem{cor}[thm]{Corollary}
\newtheorem{note}[thm]{Remark}
\newtheorem{ex}[thm]{Example}
\def\z*{\mbox{Z$^*$}}
\def\ZM*{\mbox{Z$_M^*$}}
\def\-z{\mbox{$\overline{Z}_M$}}
\def\s*{\mbox{(S$^*$)}}
\begin{document}

\title[On two generalizations of perspective abelian groups ]{On two generalizations of \\ perspective abelian groups}

\author{Andrey R. Chekhlov}
\address{Faculty of Mathematics and Mechanics, Tomsk State University, Tomsk 634050, Russia}
\email{cheklov@math.tsu.ru}
\author{Peter V. Danchev}
\address{Institute of Mathematics and Informatics, Bulgarian Academy of Sciences, 1113 Sofia, Bulgaria}
\email{danchev@math.bas.bg}
\author{\"Ozg\"ur Ta\c{s}demir}
\address{Trakya University, Faculty of Economics and Administrative Science, Department of Business Administration, Balkan Campus, Edirne, T\"urkiye}
\email{ozgurtasdemir@trakya.edu.tr}

\date{\today}
\maketitle

\begin{abstract} We are generalizing in two non-trivial ways the recently defined {\it perspective} Abelian groups to the so-called {\it IC-groups} and {\it TP-groups}, respectively, and obtain numerous results in these two directions that can be viewed as improvements on their rather more complicated structures and properties.
\end{abstract}

\section{Introduction and Motivation}

Before starting our main work, we need some backgrounds as follows: For an arbitrary ring $R$, Nicholson and Sanchez call in \cite{morphic} an $R$-module $M$ to be \textit{morphic} if $M/\alpha(M)\cong Ker(\alpha)$ for all $\alpha\in End_R(M)$. They proved in \cite[Theorem 5]{morphic} that $M$ is a morphic module if, and only if, whenever $M/A\cong B$, where $A$ and $B$ are submodules of $M$, then $M/B\cong A$.\\

It is well known that an $R$-module $M$ is said to have \textit{the internal cancelation property} (or just {\it IC} for short) if whenever $M=K\oplus L=K'\oplus L'$ with $K\cong K'$, then $L\cong L'$.\\

Moreover, Ta\c{s}demir and Ko\c{s}an called in \cite{summand-morphic} an $R$-module $M$ to be \textit{summand-morphic} if $M/K\cong L$ where $K$, $L\leq^{\oplus}M$, then $M/L\cong K$. And they proved in \cite[Proposition 3.2]{summand-morphic} that such a module $M$ is summand-morphic if, and only if, $M$ has the IC property.\\

On the other hand, two direct summands $K$ and $L$ of an $R$-module $M$ are called \textit{perspective}, which is denoted by $K\overset{P}{\sim} L$, if there exists a submodule $N$ of $M$ such that $M=K\oplus N=L\oplus N$. Obviously, $K\overset{P}{\sim} L$ implies $K\cong L$ as well as $K\overset{P}{\sim} L$ $\iff$ $L\overset{P}{\sim} K$, which means that this operation is symmetric.\\

Besides, Garg, Grover and Khurana call in \cite{p} an $R$-module $M$ to be \textit{perspective} if $K\cong L$ yields that $K\overset{P}{\sim} L$ for any $K$, $L\leq^{\oplus} M$, i.e., there exists a submodule $N$ of $M$ such that $M=K\oplus N=L\oplus N$.\\

Strengthening this notion, Khurana and Nielsen call in \cite{tp} an $R$-module $M$ to have the \textit{transitive perspectivity} (or just {\it TP} for short) whenever $K\overset{P}{\sim} L\overset{P}{\sim} N$ forces $K\overset{P}{\sim} N$ for any $K$, $L$, $N\leq^{\oplus} M$.\\

It is easy to see that any perspective module is both IC and TP.\\

In the next statement, we provide an useful equivalent condition for being endowed with the TP property like this.

\begin{thm}\label{kurtarici}
The following are equivalent for an $R$-module $M$:
\begin{enumerate}
   \item $M$ has TP;
   \item If $M=K\oplus L=K'\oplus L'$ with $K\overset{P}{\sim}K'$, then $L\overset{P}{\sim}L'$.
\end{enumerate}
\end{thm}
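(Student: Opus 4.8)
The plan is to exploit the elementary observation that two direct summands are perspective precisely when they admit a common complement; in particular, if $M = S \oplus A = S \oplus B$ for a single summand $S$, then $A \overset{P}{\sim} B$, the witnessing submodule being $S$ itself. Both implications reduce to this remark together with a careful bookkeeping of the relevant decompositions.

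For $(1) \Rightarrow (2)$, I would start from $M = K \oplus L = K' \oplus L'$ and choose, using $K \overset{P}{\sim} K'$, a common complement $C$ so that $M = K \oplus C = K' \oplus C$. Comparing $M = K \oplus L$ with $M = K \oplus C$ shows that $L$ and $C$ share the complement $K$, whence $L \overset{P}{\sim} C$; symmetrically, comparing $M = K' \oplus L'$ with $M = K' \oplus C$ gives $C \overset{P}{\sim} L'$. Thus $L \overset{P}{\sim} C \overset{P}{\sim} L'$, and a single application of the TP hypothesis yields $L \overset{P}{\sim} L'$, as required.

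For $(2) \Rightarrow (1)$, I would assume $K \overset{P}{\sim} L \overset{P}{\sim} N$ and pick witnesses $C, D$, so that $M = K \oplus C = L \oplus C$ and $M = L \oplus D = N \oplus D$. The two decompositions $M = L \oplus C = L \oplus D$ exhibit $C$ and $D$ as complements of the common summand $L$, so $C \overset{P}{\sim} D$. Now I would feed hypothesis $(2)$ the pair of decompositions $M = C \oplus K$ and $M = D \oplus N$, whose first summands satisfy $C \overset{P}{\sim} D$; its conclusion is exactly $K \overset{P}{\sim} N$, which is the transitivity required by TP.

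I expect no serious obstacle here: the entire content lies in recognizing that a perspectivity witness can be reused as a common complement, and in selecting the correct decompositions to supply to each hypothesis. The only points demanding slight care are that all the modules involved ($C$, $D$, $K$, $N$, etc.) are genuine direct summands of $M$ (which is automatic, since each appears as a summand in a decomposition of $M$) and the symmetry of $\overset{P}{\sim}$ already noted in the excerpt, which is what lets me rewrite $L' \overset{P}{\sim} C$ as $C \overset{P}{\sim} L'$ so that the chain in $(1) \Rightarrow (2)$ is correctly oriented for transitivity.
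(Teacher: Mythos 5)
Your proof is correct and follows essentially the same route as the paper's own argument: in both directions you use the witness of a given perspectivity as a common complement, derive the two auxiliary perspectivities $L\overset{P}{\sim}C\overset{P}{\sim}L'$ (resp.\ $C\overset{P}{\sim}D$), and then invoke the hypothesis exactly as the paper does. No gaps; your $C$ and $D$ play precisely the roles of the paper's $N$, $X$ and $Y$.
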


\begin{proof}
$(1)\Rightarrow (2)$. Let $M$ have transitive perspectivity and $M=K\oplus L=K'\oplus L'$ with $K\overset{P}{\sim}K'$ for some submodules $K$, $K'$, $L$ and $L'$ of $M$. Then, one sees that $M=K\oplus N=K'\oplus N$ for some $N\leq M$. Now, we have $L\overset{P}{\sim} N$ and $N\overset{P}{\sim} L'$. Thus, $L\overset{P}{\sim} L'$, because $M$ has the TP property. \\
$(2)\Rightarrow (1)$. Letting $K$, $L$ and $N$ be direct summands of $M$ with $K\overset{P}{\sim} L$ and $L\overset{P}{\sim} N$, one can write that
$$M=K\oplus X=L\oplus X=L\oplus Y=N\oplus Y$$
for some $X$, $Y\leq M$. Therefore, $X\overset{P}{\sim} Y$ and, by assumption, it must be $K\overset{P}{\sim} N$, as required.
\end{proof}

Recently, C\v{a}lug\v{a}reanu and Chekhlov defined and explored in \cite{morphicab} the notion of {\it perspective abelian groups}. \\

Another advantage of this project is that C\v{a}lug\v{a}reanu published a paper about \textquotedblleft morphic abelian groups" before. We also have a similar equivalence to the definition of morphic for IC (please see the definition of summand-morphic which is equivalent to IC). This can make our work easier. It can allow us to make easier comparisons between morphic abelian groups and IC abelian gorups.\\

Our aim in the present article is to expand somewhat the concept of perspectivity for abelian groups to the next two more general concepts, again related to abelian groups, namely:

\medskip

\noindent\textbf{Internal Cancelation (IC):} We shall say that an abelian group $G$ is {\it IC} if $G=A\oplus B=A'\oplus B'$ with $A\cong A'$ assures that $B\cong B'$.\\

\noindent\textbf{Transitive Perspectivity (TP):} We shall say that an abelian group $G$ is {\it TP} if $G=A\oplus B=A'\oplus B'$ with $A\overset{P}{\sim}A'$ ensures that $B\overset{P}{\sim}B'$.\\

At first look on these two definitions, one can ask: are IC and TP properties equivalent for arbitrary modules? The answer is definitely {\bf no} as the following example shows. Recall that a module $M$ said to have the {\it substitution property} if, for every module $N$ having a decomposition $N=A\oplus B=A'\oplus B'$ with $A\cong M\cong A'$, there exists a submodule $C\leq N$ such that $N=C\oplus B=C\oplus B'$. Note that the substitution property yields the IC one.

\begin{ex}\label{noimplications}
(1) The $\mathbb{Z}$-module $\mathbb{Z}\oplus \mathbb{Z}$ has IC (see \cite[Example 3.2]{cigdem}), but it does not have the property of perspectivity by \cite[Theorem 5.12]{p}. Even something more, it is not TP in regard to \cite[Corollary 2.6]{tp} as obviously $\mathbb{Z}$ does not have the substitution property. \\
(2) In \cite[p.13]{handelman}, there is a construction which has the TP property, but is not directly finite (and hence, it does not have IC).
\end{ex}

However, for modules, the following hierarchy is principally well-known:\\

\noindent\begin{displaymath}
\xymatrix{
\text{morphic}    \ar[r]           &    \text{summmand-morphic (i.e., IC)}                         \\
\text{perspective}\ar[dr] \ar[ur]  &                                                                      \\
                                   &   \text{transitive perspectivity}\ar[uu] \ar@{<->}[uu]|-{\diagup} \\
         }
\end{displaymath}

\vskip 10pt

We now continue our work with two subsequent sections which contain the basic material motivating our writing of the present paper. Saying more conceptually, our objective here is to examine in-depth the IC and TP properties by discovering their fundamental properties.

\section{Abelian groups satisfying the internal cancelation property}

First, recall that the direct sum of two modules having the IC property need {\it not} to have the IC property (see, e.g., \cite{lam}). However, note that any indecomposable $R$-module has the IC property.

\medskip

For completeness of the exposition, we state the following version for abelian groups.

\begin{ex}
\begin{enumerate}
  \item The $\mathbb{Z}$-module $\mathbb{Q}$ is morphic, and hence possesses the IC property (see \cite{morphicab}).
  \item The $\mathbb{Z}$-module $\mathbb{Z}_n$ is morphic, and hence possesses the IC property (see \cite{morphic}).
  \item The $\mathbb{Z}$-module $\mathbb{Z}_{2}\oplus \mathbb{Z}_{3}$ is morphic, and hence possesses the IC property (see \cite{morphic}).
  \item The $\mathbb{Z}$-module $\mathbb{Z}_{p^\infty}$ possesses the IC property, but is not morphic (see \cite{morphic}).
  \item The $\mathbb{Z}$-module $\mathbb{Z}$ possesses the IC property, but is not morphic (see \cite{morphic}).
  \item The $\mathbb{Z}$-module $\mathbb{Z}\oplus \mathbb{Z}$ possesses the IC property by \cite[Example 3.2]{cigdem}, but is not morphic.
  \item The $\mathbb{Z}$-module $\mathbb{Z}_{2}\oplus \mathbb{Z}_{4}$ possesses the IC property, but is not morphic by \cite[p. 2645]{morphic}.
\end{enumerate}
\end{ex}

The next statement is worthy of recording.

\begin{prop}\label{Q}
Any finite direct sum of $\mathbb{Q}$ (equivalently, a finite rank torsion-free divisible group) is IC.
\end{prop}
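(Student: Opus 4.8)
The plan is to exploit the fact that a finite rank torsion-free divisible group is nothing more than a finite-dimensional vector space over $\mathbb{Q}$, so that the internal cancellation reduces to the additivity of dimension. First I would fix an isomorphism $G\cong \mathbb{Q}^n$ and recall that, viewed as a $\mathbb{Q}$-module, $G$ is an $n$-dimensional $\mathbb{Q}$-vector space; the whole argument will be carried out on the level of $\mathbb{Q}$-dimension (equivalently, torsion-free rank).

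Next, given an arbitrary direct decomposition $G=A\oplus B$, I would check that both summands are themselves finite-dimensional $\mathbb{Q}$-vector spaces. Indeed, $A$ and $B$ are subgroups of the torsion-free group $G$, hence torsion-free, and they are direct summands of the divisible group $G$, hence divisible; a torsion-free divisible group is precisely a $\mathbb{Q}$-vector space, and its finite-dimensionality follows since its rank is bounded by that of $G$. Consequently the rank (i.e.\ $\mathbb{Q}$-dimension) is additive along the decomposition, giving $\dim_{\mathbb{Q}}A+\dim_{\mathbb{Q}}B=n$.

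Finally, suppose $G=A\oplus B=A'\oplus B'$ with $A\cong A'$. Since finite-dimensional $\mathbb{Q}$-vector spaces are classified up to isomorphism by their dimension alone, the isomorphism $A\cong A'$ yields $\dim_{\mathbb{Q}}A=\dim_{\mathbb{Q}}A'$; combining this with the additivity from the previous step gives $\dim_{\mathbb{Q}}B=n-\dim_{\mathbb{Q}}A=n-\dim_{\mathbb{Q}}A'=\dim_{\mathbb{Q}}B'$, and therefore $B\cong B'$, which is exactly the IC property. The only place deserving attention is the passage from a group-theoretic direct summand to a genuine $\mathbb{Q}$-subspace, i.e.\ verifying that summands inherit both divisibility and torsion-freeness; once this is in hand there is no real obstacle, as everything is controlled by the single numerical invariant of dimension. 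I would note in passing that this also makes transparent the stronger fact that such $G$ is in fact perspective, since equal-dimensional complements in a vector space can always be matched by a common complement, but for the present statement the dimension count already suffices.
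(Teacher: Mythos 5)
Your proof is correct, but it takes a genuinely different route from the paper. The paper's entire proof is a one-line citation: it invokes \cite[Proposition 3]{morphicab}, i.e.\ the fact that a finite rank torsion-free divisible group is \emph{morphic}, and then the IC property follows from the implication ``morphic $\Rightarrow$ summand-morphic (i.e., IC)'' recalled in the introduction. Your argument, by contrast, is self-contained and elementary: you identify $G$ with a finite-dimensional $\mathbb{Q}$-vector space, verify that any direct summand inherits both divisibility and torsion-freeness (hence is itself a $\mathbb{Q}$-vector space of dimension at most $n$), and then conclude by additivity of dimension together with the classification of finite-dimensional $\mathbb{Q}$-spaces by dimension. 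The one point you leave implicit, and might state, is that an isomorphism of abelian groups between $\mathbb{Q}$-vector spaces is automatically $\mathbb{Q}$-linear (equivalently, that torsion-free rank is an isomorphism invariant), which is exactly what converts $A\cong A'$ into $\dim_{\mathbb{Q}}A=\dim_{\mathbb{Q}}A'$. As for what each approach buys: the paper's citation is shorter and rides on a stronger property (morphicity) established elsewhere, whereas your argument requires no external input, makes the mechanism (a single numerical invariant) transparent, and, as you observe, actually yields the stronger conclusion that $G$ is perspective --- equal-dimensional summands of a finite-dimensional vector space admit a common complement --- of which IC is a formal consequence.
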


\begin{proof}
It is immediate invoking \cite[Proposition 3]{morphicab}.
\end{proof}

But, as opposite to Proposition~\ref{Q}, an infinite direct sum of copies of $\mathbb{Q}$ as a $\mathbb{Z}$-module need {\it not} be an IC group (see \cite[Example 20]{unit-regular}).\\

It was proven in \cite[Theorem 4]{morphicab} that, if a group $G$ is morphic, then $G$ is divisible exactly when it is torsion-free.

\medskip

However, the next example illustrates that this assertion is {\it no} longer true for IC abelian groups.

\begin{ex}
\begin{enumerate}
  \item The $\mathbb{Z}$-module $\mathbb{Z}$ is a torsion-free IC group, but it is not divisible.
  \item The $\mathbb{Z}$-module $\mathbb{Z}_{p^\infty}$ is a divisible IC group, but it is not torsion-free.
\end{enumerate}
\end{ex}

The following claim is helpful.

\begin{prop}
Any free abelian group with finite rank is IC.
\end{prop}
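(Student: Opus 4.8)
The plan is to reduce the whole statement to a rank count. Write the group as $G\cong\mathbb{Z}^n$ for some finite $n$, and suppose we are given two decompositions $G=A\oplus B=A'\oplus B'$ with $A\cong A'$. The first step is to observe that in this setting every direct summand is automatically free of finite rank: each of $A$, $B$, $A'$, $B'$ is a finitely generated torsion-free abelian group (being a homomorphic image, via the projection, of $G$, and simultaneously a subgroup of $G$), and finitely generated torsion-free abelian groups are free. Thus all four pieces are free of finite rank, and the notion of rank is available for each of them.

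Next I would invoke the additivity of rank over a direct sum together with its invariance under isomorphism. From $G=A\oplus B$ and $G=A'\oplus B'$ we obtain $n=\mathrm{rank}(A)+\mathrm{rank}(B)=\mathrm{rank}(A')+\mathrm{rank}(B')$. Since $A\cong A'$ forces $\mathrm{rank}(A)=\mathrm{rank}(A')$, subtracting gives $\mathrm{rank}(B)=\mathrm{rank}(B')$. Finally, two free abelian groups of the same finite rank are isomorphic, so $B\cong B'$, which is exactly the IC condition; hence $G$ is IC.

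The only step that requires genuine justification — and so the nearest thing to an obstacle — is the claim that every direct summand of $\mathbb{Z}^n$ is free of finite rank. This rests on the classical fact that a finitely generated torsion-free abelian group is free (equivalently, that a subgroup of a finitely generated free abelian group is again free of no larger rank). Once this structural input is granted, the remainder is a purely arithmetic cancellation of ranks, so the proof is short and elementary.
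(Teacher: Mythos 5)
Your proof is correct and follows essentially the same route as the paper's: both arguments count ranks across the two decompositions, use additivity of rank over direct sums and its invariance under isomorphism to get $r(B)=r(B')$, and then conclude $B\cong B'$ from the classification of finite-rank free abelian groups (the paper cites Rotman, Theorem 10.14, for this last step). Your explicit justification that every direct summand of $\mathbb{Z}^n$ is itself free of finite rank is a detail the paper leaves implicit, but it is the same argument.
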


\begin{proof}
Suppose $A$ and $B$ are finite direct summands of $G$ with $A\cong B$. For some subgroups $A'$ and $B'$ of $G$, we may write $G=A\oplus A'=B\oplus B'$. Then, $$r(G)=r(A)+r(A')=r(B)+r(B').$$ Since $A\cong B$, we deduce $r(A)=r(B)$ and, therefore, $r(A')=r(B')$. Thus, in virtue of \cite[Theorem 10.14]{rotman}, we infer the desired isomorphism $A'\cong B'$.
\end{proof}

In order to formulate our other achievements, we need the following.

\begin{lem}\label{hom}
If $R$-modules $M$ and $N$ both have the IC property such that $$Hom_R(M,N)=\{0\}=Hom_R(N,M),$$ then $M\oplus N$ has the IC property too.
\end{lem}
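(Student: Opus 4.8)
The plan is to exploit the orthogonality $Hom_R(M,N)=\{0\}=Hom_R(N,M)$ to reduce every question about direct summands of $P:=M\oplus N$ to the two pieces $M$ and $N$ taken separately, after which the hypotheses that $M$ and $N$ are IC finish the argument almost for free. The first thing I would record is that this orthogonality collapses the endomorphism ring: writing $End_R(P)$ in its usual $2\times 2$ matrix form over the two factors, the off-diagonal $Hom$-groups vanish, so $End_R(P)\cong End_R(M)\times End_R(N)$. Consequently every idempotent of $End_R(P)$ is diagonal, whence every direct summand $K\leq^{\oplus}P$ splits as $K=K_M\oplus K_N$ with $K_M\leq^{\oplus}M$ and $K_N\leq^{\oplus}N$; moreover, if $P=K\oplus L$ then the complementary idempotent forces $M=K_M\oplus L_M$ and $N=K_N\oplus L_N$. (If one prefers to avoid endomorphism-ring language, the same splitting drops out by composing the inclusion $K\hookrightarrow P$ with the projections $P\to M$ and $P\to N$ and observing that any homomorphism between a summand of $M$ and a summand of $N$ factors through $M$ and $N$ and hence is zero.)

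Now I take two decompositions $P=K\oplus L=K'\oplus L'$ with $K\cong K'$, and by the previous step I write $K=K_M\oplus K_N$, $L=L_M\oplus L_N$, $K'=K'_M\oplus K'_N$, $L'=L'_M\oplus L'_N$, together with the induced refinements
$$M=K_M\oplus L_M=K'_M\oplus L'_M,\qquad N=K_N\oplus L_N=K'_N\oplus L'_N.$$
The crucial point is to upgrade the single isomorphism $K\cong K'$ into the two coordinatewise isomorphisms $K_M\cong K'_M$ and $K_N\cong K'_N$. For this I would represent a fixed isomorphism $\varphi:K_M\oplus K_N\to K'_M\oplus K'_N$ as a $2\times 2$ matrix of homomorphisms; its off-diagonal entries are maps $K_N\to K'_M$ (a summand of $N$ into a summand of $M$) and $K_M\to K'_N$ (a summand of $M$ into a summand of $N$), both of which vanish by orthogonality. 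Hence $\varphi$ is diagonal, and since $\varphi^{-1}$ is diagonal for the same reason, the two diagonal entries are themselves mutually inverse isomorphisms.

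With $K_M\cong K'_M$ and $K_N\cong K'_N$ secured, I would invoke the IC property of $M$ and of $N$ separately: from $M=K_M\oplus L_M=K'_M\oplus L'_M$ we obtain $L_M\cong L'_M$, and from $N=K_N\oplus L_N=K'_N\oplus L'_N$ we obtain $L_N\cong L'_N$. Summing these gives $L=L_M\oplus L_N\cong L'_M\oplus L'_N=L'$, which is precisely the IC property for $P=M\oplus N$.

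I expect the main obstacle to be the second paragraph, namely verifying that the splitting of summands is compatible with the given direct-sum decompositions and, above all, that the off-diagonal components of $\varphi$ genuinely vanish. Everything there rests on applying $Hom_R(M,N)=\{0\}=Hom_R(N,M)$ not to $M$ and $N$ themselves but to their summands, so I would be careful to state explicitly the little lemma that a homomorphism from a summand of $M$ to a summand of $N$ (or vice versa) is necessarily zero, since it gets used repeatedly.
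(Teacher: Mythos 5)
Your proof is correct, but it takes a different route from the paper in the trivial sense that the paper gives no argument at all: its entire proof is the one-line citation to \cite[Proposition 8.5]{lam}, so your write-up supplies exactly the details the paper outsources. The argument you give --- writing $End_R(M\oplus N)$ as a $2\times 2$ matrix ring whose off-diagonal entries vanish, deducing that every idempotent (hence every summand $K=K_M\oplus K_N$ together with its complement, giving $M=K_M\oplus L_M$ and $N=K_N\oplus L_N$) and every isomorphism between summands is diagonal, and then applying the IC hypothesis to $M$ and $N$ coordinatewise --- is the standard proof of Lam's proposition, and each step checks out. In particular, the ``little lemma'' you flag at the end is indeed the load-bearing point and you handle it correctly: a homomorphism $f:K_N\to K'_M$ between a summand of $N$ and a summand of $M$ extends, via the projection $N\to K_N$ and the inclusion $K'_M\hookrightarrow M$, to an element of $Hom_R(N,M)=\{0\}$, and since the projection is surjective and the inclusion injective, $f=0$; this kills the off-diagonal entries of both $\varphi$ and $\varphi^{-1}$, so the diagonal entries are mutually inverse isomorphisms $K_M\cong K'_M$ and $K_N\cong K'_N$, and IC of $M$ and $N$ then yields $L_M\cong L'_M$, $L_N\cong L'_N$, hence $L\cong L'$. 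What your approach buys is a self-contained proof that makes visible exactly where the orthogonality hypothesis enters; what the paper's citation buys is brevity. There is no gap.
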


\begin{proof}
It is follows at once from \cite[Proposition 8.5]{lam}.
\end{proof}

We, thereby, arrive at the following (see \cite[Proposition 5]{morphicab} too).

\begin{prop}\label{torsion}
A torsion abelian group is IC if, and only if, all its primary components are IC.
\end{prop}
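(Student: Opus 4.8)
The plan is to exploit the canonical primary decomposition $G=\bigoplus_p G_p$ of a torsion group, together with the fact that each primary component $G_p$ is a fully invariant subgroup of $G$. The whole argument rests on two elementary observations. First, if $G=A\oplus B$ is any direct decomposition, then full invariance of $G_p$ forces the two projections to carry $G_p$ into itself, whence $G_p=(A\cap G_p)\oplus(B\cap G_p)$ and every direct summand of $G$ splits compatibly with the primary decomposition. Second, every isomorphism between subgroups of $G$ preserves primary components (it respects the orders of elements), so that $A\cong A'$ entails $A\cap G_p\cong A'\cap G_p$ for each prime $p$.

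For the sufficiency direction, I would start from $G=A\oplus B=A'\oplus B'$ with $A\cong A'$ and pass to $p$-components. Writing $A_p=A\cap G_p$ and likewise for $B,A',B'$, the first observation gives $G_p=A_p\oplus B_p=A'_p\oplus B'_p$, while the second gives $A_p\cong A'_p$. Applying the IC hypothesis to each $G_p$ yields $B_p\cong B'_p$, and assembling these isomorphisms over all primes produces $B=\bigoplus_p B_p\cong\bigoplus_p B'_p=B'$, as wanted.

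For the necessity direction, I would fix a prime $p$ and a pair of decompositions $G_p=A_p\oplus B_p=A'_p\oplus B'_p$ of the component with $A_p\cong A'_p$. Setting $H=\bigoplus_{q\neq p}G_q$, these lift to decompositions $G=A_p\oplus(B_p\oplus H)=A'_p\oplus(B'_p\oplus H)$ of the ambient group, still with $A_p\cong A'_p$; the IC property of $G$ then yields $B_p\oplus H\cong B'_p\oplus H$. Since $H$ has trivial $p$-primary component, taking $p$-primary parts of both sides collapses this to $B_p\cong B'_p$, which is exactly the IC property of $G_p$.

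The main obstacle, and the reason one cannot simply iterate Lemma~\ref{hom}, is that a torsion group may have infinitely many nonzero primary components, so a naive induction on the number of components is unavailable. The fully invariant primary decomposition is precisely what sidesteps this difficulty: it reduces any decomposition of $G$ to a simultaneous family of decompositions of the $G_p$ and then recombines them, without ever forming a direct sum of only finitely many components. The one point demanding care is the verification that the projections respect full invariance and that isomorphisms of summands restrict to isomorphisms on each $p$-component; once these are in place, the remainder is routine bookkeeping.
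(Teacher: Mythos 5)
Your proof is correct, and it rests on the same decomposition as the paper's: split everything along the primary components $G=\bigoplus_p G_p$ and apply the hypothesis componentwise. The difference lies in which formulation of IC gets verified. The paper never touches the complement-isomorphism definition directly; instead it verifies the equivalent summand-morphic condition (if $G/K\cong L$ with $K,L$ direct summands, then $G/L\cong K$), writing $G/K\cong\bigoplus_p(G_p/K_p)$ and transferring the isomorphism through these quotients, so it tacitly leans on the equivalence of IC with the summand-morphic property cited in the introduction. You work straight from the definition ($G=A\oplus B=A'\oplus B'$ with $A\cong A'$ forces $B\cong B'$), which makes the argument self-contained: the only inputs are full invariance of the $G_p$, the induced splitting $G_p=(A\cap G_p)\oplus(B\cap G_p)$, and preservation of primary components under isomorphism. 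For necessity, the paper simply cites that IC passes to direct summands (proved later as Remark~\ref{0.1}(2)), whereas you lift a decomposition of $G_p$ to $G$ by adjoining $H=\bigoplus_{q\neq p}G_q$ to the complements and then strip $H$ off by taking $p$-parts; this works, though the paper's general summand argument is slightly slicker at this point, since moving the isomorphic pieces across, i.e., $G=(A_p\oplus H)\oplus B_p=(A'_p\oplus H)\oplus B'_p$ with $A_p\oplus H\cong A'_p\oplus H$, yields $B_p\cong B'_p$ at once with no cancellation step. Finally, your observation that one cannot simply iterate Lemma~\ref{hom} because there may be infinitely many nonzero components correctly identifies why the fully invariant primary decomposition is the right tool; the paper's proof makes the same implicit choice.
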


\begin{proof}
The necessity is pretty clear, because being IC is inherited via direct summands.\\
For the sufficiency, let $G=\bigoplus_p G_p$ be the direct decomposition of $G$ into $p$-components. Likewise, each subgroup $N$ of $G$ has a corresponding decomposition into primary components $N=\bigoplus_p N_p$, with $N_p$ a subgroup of $G_p$ for all $p$. So, for a direct summand $K$ of $G$, we can write $G/K\cong \bigoplus_p (G_p/K_p)$.
Now, we assume that $G/K\cong L$ for a direct summand $L$ of $G$. Similarly, we may write $L=\bigoplus_p L_p$, whence $\bigoplus_p (G_p/K_p)\cong \bigoplus_p L_p$. Then, it follows that $G_p/K_p\cong L_p$ as the property is inherited component-wisely by direct summands. But, since every $G_p$ is IC, it must be that $G_p/L_p\cong K_p$, and hence it is immediately true that $\bigoplus_p (G_p/L_p)\cong \bigoplus_p K_p$, as wanted.
\end{proof}

We proceed by proving the following.

\begin{lem}\label{injec}
Any injective module with finite Goldie dimension has the IC property.
\end{lem}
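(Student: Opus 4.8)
The plan is to reduce the statement to the Krull--Remak--Schmidt--Azumaya theorem via the structure of injectives of finite Goldie dimension. First I would recall the structure theorem: an injective module $E$ of finite Goldie (uniform) dimension $n$ coincides with its own injective hull and decomposes as a finite direct sum $E=\bigoplus_{i=1}^{n}E_i$ of indecomposable injective submodules, the number of summands being exactly the Goldie dimension. The key point I would then emphasize is that each indecomposable injective $E_i$ has a \emph{local} endomorphism ring: for an injective module the Jacobson radical of $\mathrm{End}(E_i)$ equals $\{f:\ker f \text{ is essential in } E_i\}$ with von Neumann regular quotient, and when $E_i$ is indecomposable it has no nontrivial idempotents, so the regular quotient is a division ring and $\mathrm{End}(E_i)$ is local. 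Thus $E$ is a finite direct sum of modules with local endomorphism rings.

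With this in hand, Azumaya's theorem applies: the decomposition $E=\bigoplus_{i=1}^{n}E_i$ is unique up to isomorphism and permutation of the summands, and it complements direct summands. Concretely, for any decomposition $E=K\oplus L$ there is a subset $J\subseteq\{1,\dots,n\}$ with $E=K\oplus\bigoplus_{j\in J}E_j$, whence $L\cong\bigoplus_{j\in J}E_j$; likewise a decomposition $E=K'\oplus L'$ produces $J'$ with $L'\cong\bigoplus_{j'\in J'}E_{j'}$. To finish, I would take $E=K\oplus L=K'\oplus L'$ with $K\cong K'$, split $K$ and $K'$ into their own indecomposable summands (legitimate, since every direct summand of $E$ again inherits a Krull--Schmidt decomposition into indecomposables with local endomorphism rings by the complementation property), and compare multisets of isomorphism types. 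Since $K\cong K'$ forces the type-multisets of $K$ and $K'$ to agree, and both sit inside the fixed multiset $\{[E_1],\dots,[E_n]\}$, the complementary multisets attached to $L$ and $L'$ must agree as well, giving $L\cong L'$. Equivalently, one simply cancels $K$ from $K\oplus L\cong K'\oplus L'$ in a Krull--Schmidt setting to get $L\cong L'$, which is exactly the IC property.

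I expect the main obstacle to be not a hard estimate but the bookkeeping in this last step: one must verify that $K$, $L$, $K'$, $L'$ all carry honest Krull--Schmidt decompositions inherited from $E$ before cancellation may be invoked, and this is precisely what the complementation clause of Azumaya's theorem guarantees. (As a shorter alternative, I could instead observe that a finite direct sum of modules with local endomorphism rings has a semiperfect endomorphism ring, and that a module whose endomorphism ring is semiperfect is internally cancellable; but the Azumaya route is the most self-contained.) Once finite Goldie dimension has delivered the finite decomposition into indecomposable injectives with local endomorphism rings, the IC property is essentially forced.
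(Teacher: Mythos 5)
Your proof is correct, but it takes a genuinely different route from the paper's, which disposes of this lemma in a single line by citing Proposition 17 of \cite{wp}; all the mathematical content there is outsourced to that external result. You instead reconstruct the classical self-contained argument: finite uniform dimension gives $E=E_1\oplus\cdots\oplus E_n$ with each $E_i$ an indecomposable injective (the injective hull inside $E$ of a uniform submodule), then the Matlis-type fact that indecomposable injectives have local endomorphism rings, and finally Krull--Remak--Schmidt--Azumaya to cancel $K$ from $K\oplus L\cong K'\oplus L'$. Your sketch of the local-endomorphism-ring step is the standard one, with one routine point left tacit: to pass from ``$\mathrm{End}(E_i)$ has no nontrivial idempotents'' to ``the regular quotient $\mathrm{End}(E_i)/J$ has none (hence is a division ring)'' you need that idempotents lift modulo $J$, which does hold for (quasi-)injective modules. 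The bookkeeping you flag at the end is handled exactly as you say: the complementation clause of Azumaya's theorem shows every direct summand of $E$ is, up to isomorphism, a subsum of the $E_j$'s, so $K$, $L$, $K'$, $L'$ all carry honest Krull--Schmidt decompositions and cancellation is legitimate (alternatively, each summand with local endomorphism ring has the substitution property and so cancels directly). What your route buys is transparency and strictly more information --- uniqueness of decomposition, substitution, cancellation --- from which IC falls out; what the paper's route buys is brevity, at the cost of the reader having to chase the cited proposition.
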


\begin{proof}
It is a direct consequence of \cite[Proposition 17]{wp}.
\end{proof}

Before giving the next result, let us note that any module having the IC property is necessarily directly finite.

\begin{prop}\label{div}
Let $G$ be a divisible abelian group. Then, $G$ is IC if, and only if, all $p$-primary components and their torsion free parts have finite uniform dimension (i.e., finite Goldie dimension).
\end{prop}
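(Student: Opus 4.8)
The plan is to use the classification of divisible groups and to separate the torsion and torsion-free behaviours. Recall that $G$ decomposes as $G \cong \left(\bigoplus_p \mathbb{Z}_{p^\infty}^{(\kappa_p)}\right) \oplus \mathbb{Q}^{(\lambda)}$, in which the $p$-primary component is $T_p \cong \mathbb{Z}_{p^\infty}^{(\kappa_p)}$ and the torsion-free part is $V \cong \mathbb{Q}^{(\lambda)}$. Since $\mathbb{Z}_{p^\infty}$ and $\mathbb{Q}$ are uniform modules, the uniform (Goldie) dimension of $T_p$ is $\kappa_p$ and that of $V$ is $\lambda$; hence the stated condition is precisely that every $\kappa_p$ and $\lambda$ is finite. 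I will also use that the torsion subgroup $T(G) = \bigoplus_p T_p$ is divisible, hence a direct summand of $G$, with torsion-free divisible complement $V$.

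For the \emph{necessity}, I would first recall that being IC passes to direct summands and that every IC module is directly finite (as noted just above). Each $T_p$ and $V$ is a direct summand of $G$, so each is IC and therefore directly finite. If some $\kappa_p$ were infinite, then $\kappa_p = 1 + \kappa_p$ gives $T_p \cong \mathbb{Z}_{p^\infty} \oplus T_p$ with $\mathbb{Z}_{p^\infty} \neq 0$, contradicting direct finiteness; likewise an infinite $\lambda$ would give $V \cong \mathbb{Q} \oplus V$. Thus all $\kappa_p$ and $\lambda$ are finite, i.e. all the listed components have finite uniform dimension.

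For the \emph{sufficiency}, assume every $\kappa_p$ and $\lambda$ is finite. Each $T_p$ is then an injective module of finite Goldie dimension, so it is IC by Lemma~\ref{injec}, whence $T(G)$ is IC by Proposition~\ref{torsion}. To deduce that $G$ itself is IC, suppose $G = X \oplus Y = X' \oplus Y'$ with $X \cong X'$. Applying the additive torsion functor yields $T(G) = T(X) \oplus T(Y) = T(X') \oplus T(Y')$ with $T(X) \cong T(X')$, so $T(Y) \cong T(Y')$ because $T(G)$ is IC. For the torsion-free ranks, additivity of $r_0$ together with $r_0(G) = \lambda < \infty$ gives $r_0(Y) = \lambda - r_0(X) = \lambda - r_0(X') = r_0(Y')$. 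Since $X, Y, X', Y'$ are divisible (being summands of a divisible group) and a divisible group is determined up to isomorphism by its torsion subgroup and its torsion-free rank, we conclude $Y \cong Y'$.

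The main obstacle lies in the sufficiency, and precisely in the fact that one cannot obtain the IC property of $G$ simply by gluing the IC modules $T(G)$ and $V$ through Lemma~\ref{hom}: although $\mathrm{Hom}(T(G), V) = 0$, the opposite Hom-group $\mathrm{Hom}(V, T(G)) = \mathrm{Hom}(\mathbb{Q}^{(\lambda)}, \bigoplus_p \mathbb{Z}_{p^\infty}^{(\kappa_p)})$ is nonzero, so the hypotheses of Lemma~\ref{hom} fail. The way around this is exactly the argument above: the torsion subgroup is fully invariant, so every decomposition of $G$ restricts to a decomposition of $T(G)$, and the finiteness of $\lambda$ is what legitimises the cardinal subtraction $r_0(Y) = \lambda - r_0(X)$. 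It is worth stressing where each finiteness hypothesis enters: finiteness of each $\kappa_p$ is needed to make the individual $T_p$ directly finite (hence IC), while finiteness of $\lambda$ is needed both for direct finiteness of $V$ and for the rank subtraction to be valid.
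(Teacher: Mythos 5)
Your proof is correct, and in the sufficiency direction it takes a genuinely different --- and in fact sounder --- route than the paper. The necessity halves coincide: both pass to the summands $T_p$ and $V$, use that IC summands of an IC group are IC and directly finite, and note that an infinite rank would make the summand isomorphic to a proper summand of itself (you spell this out via $\kappa_p = 1+\kappa_p$; the paper leaves it implicit). For sufficiency, the paper splits into the torsion and non-torsion cases and, in the non-torsion case, glues $G_T$ and $G_F$ together by invoking Lemma~\ref{hom} after checking only $\mathrm{Hom}(G_T,G_F)=\{0\}$; but, exactly as you observe, that lemma requires \emph{both} Hom-groups to vanish, and $\mathrm{Hom}(G_F,G_T)=\mathrm{Hom}\bigl(\mathbb{Q}^n,\bigoplus_p \mathbb{Z}_{p^\infty}^{(\kappa_p)}\bigr)$ is nonzero whenever $G_T\neq\{0\}$ (e.g.\ compose $\mathbb{Q}\twoheadrightarrow\mathbb{Q}/\mathbb{Z}$ with the projection onto a $p$-component), so the paper's appeal to Lemma~\ref{hom} is not justified as written. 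Your replacement argument --- apply the (fully invariant, hence additive) torsion functor to both decompositions to cancel the torsion parts inside the IC group $T(G)$, and then cancel torsion-free ranks by finite-cardinal subtraction, concluding via the classification of divisible groups by $T(D)$ and $r_0(D)$ --- is complete, handles the torsion and mixed cases uniformly, and thus repairs the gap; its only cost is that it is specific to divisible groups rather than an instance of a general module-theoretic gluing lemma.
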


\begin{proof}
$(\Rightarrow)$: Let $G$ be IC. Then, every $p$-primary components of $G$ is also IC as well as their torsion-free parts are IC, because they are direct summands; thus, by what we have note above, they are directly finite. Therefore, every $p$-primary component and its corresponding torsion free part will have finite uniform dimension (i.e., finite Goldie dimension).\\

$(\Leftarrow)$: We prove the sufficiency for two different cases:

Firstly, we assume that $G$ is a torsion group. Then, the proof is same as that of Proposition \ref{torsion}.

Now, we assume that $G$ is not torsion. Then, we may write that $G=G_T\oplus G_F$, where $G_F=\bigoplus^{n}_{i=1}\mathbb{Q}$ and $G_T$ is the torsion part of $G$. Appealing to Lemma \ref{injec}, both $G_T$ and $G_F$ are IC groups. But, since $Hom(G_T,G_F)=\{0\}$ and $G_T$,$G_F$ both are IC, Lemma \ref{hom} applies to get that $G$ is too IC, as promised.
\end{proof}

\begin{note}
Recall that a direct sum of cyclic $p$-groups is called \textit{homogeneous} if all its direct summands of rank 1 are isomorphic. It is evident then that homogeneous groups are themselves IC whenever they have finite rank (compare with Remark~\ref{0.1} alluded to below). On the other hand, it was proven in \cite[Theorem 7]{morphicab} that a (reduced) $p$-group $G$ is morphic precisely when it is finite and homogeneous.
\end{note}

The proofs of the next result is analogous to \cite[Proposition 10]{morphicab}.

\begin{prop}
Suppose that $G=D(G)\oplus R$, where $D(G)$ is the divisible part of a group $G$. If both $D(G)$ and $R$ are IC, and $R$ is torsion, then $G$ is a splitting IC group.
\end{prop}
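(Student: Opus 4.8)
The plan is to peel off the torsion-free divisible part, settle the torsion part by reduction to primary components, and recombine using the rigidity of finite-rank torsion-free divisible groups. First I would record the structure forced by the hypotheses: since $D(G)$ is divisible and IC, Proposition~\ref{div} guarantees that its torsion-free part $\widehat{D}$ has finite rank, so $\widehat{D}\cong \mathbb{Q}^{m}$, while its torsion part is $D_t=\bigoplus_p \mathbb{Z}_{p^\infty}^{(n_p)}$ with every $n_p$ finite. Writing $T(G)=D_t\oplus R$ for the torsion part of $G$, we obtain $G=\widehat{D}\oplus T(G)$ with $\widehat{D}$ torsion-free; hence $T(G)$ is a direct summand and $G$ is splitting, which settles that half of the statement at once. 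I would also note here that the reduced part of $G$ is exactly $R$, which is torsion, so the reduced part of every direct summand of $G$ is torsion as well.

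For the IC property I would take $G=A\oplus B=A'\oplus B'$ with $A\cong A'$ and aim at $B\cong B'$. Applying the torsion functor gives $T(G)=T(A)\oplus T(B)=T(A')\oplus T(B')$ with $T(A)\cong T(A')$, while passing to $G/T(G)\cong \widehat{D}=\mathbb{Q}^{m}$ yields $A/T(A)\oplus B/T(B)\cong \mathbb{Q}^{m}\cong A'/T(A')\oplus B'/T(B')$ with $A/T(A)\cong A'/T(A')$. By the first paragraph the reduced part of $B$ is torsion, so $B/T(B)$ is torsion-free divisible; being of finite rank it is determined up to isomorphism by its rank, and the rank count above forces $B/T(B)\cong B'/T(B')$. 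Since $B/T(B)$ is divisible the sequence $0\to T(B)\to B\to B/T(B)\to 0$ splits, so $B\cong T(B)\oplus (B/T(B))$, and likewise for $B'$. Thus everything reduces to proving that $T(G)$ is IC, for then $T(B)\cong T(B')$ and the two isomorphisms combine to give $B\cong B'$.

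To show $T(G)=D_t\oplus R$ is IC I would invoke Proposition~\ref{torsion} and reduce to each primary component $\mathbb{Z}_{p^\infty}^{(n_p)}\oplus R_p$, where $R_p$ is itself IC (by Proposition~\ref{torsion} applied to $R$) and the divisible summand has finite rank; inside this $p$-group the divisible part is fully invariant, so complementary summands decompose compatibly into a divisible piece, compared by rank, and a reduced piece, governed by the IC property of $R_p$. The main obstacle is precisely this point. One is tempted to split $D(G)$ off from $R$ via Lemma~\ref{hom}, but $\mathrm{Hom}(R,D(G))$ need not vanish: a torsion group maps nontrivially into the torsion divisible part, e.g. $\mathrm{Hom}(\mathbb{Z}_p,\mathbb{Z}_{p^\infty})\neq 0$, so Lemma~\ref{hom} does not apply across the divisible/reduced split. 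The way around it is exactly the full invariance of the divisible part, which ensures that direct-sum decompositions respect the divisible/reduced splitting and thereby lets rank-counting dispose of the divisible portion while the IC hypothesis on $R$ (hence on each $R_p$) disposes of the reduced portion.
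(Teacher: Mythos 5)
Your proposal is correct in substance, but it takes a genuinely different route from the paper -- and the difference is instructive. The paper's own proof is two lines: write $G=D(G)\oplus T(G)$, observe that $\mathrm{Hom}(\text{divisible},\text{reduced})=0=\mathrm{Hom}(\text{torsion},\text{torsion-free})$, and invoke Lemma~\ref{hom}. The ``main obstacle'' you single out is precisely the weak point of that argument: Lemma~\ref{hom} requires \emph{both} Hom groups to vanish, and while $\mathrm{Hom}(D(G),R)=0$ always holds, $\mathrm{Hom}(R,D(G))\neq 0$ whenever $D(G)$ and $R$ have a common relevant prime (e.g. $\mathrm{Hom}(\mathbb{Z}_p,\mathbb{Z}_{p^\infty})\neq 0$, using injectivity of $\mathbb{Z}_{p^\infty}$). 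Writing $D(G)=D_f\oplus D_t$ for its torsion-free and torsion divisible parts, the only grouping with two-sided Hom-vanishing is the pair $(D_f,R)$; the piece $D_t$ cannot be attached to anything via Lemma~\ref{hom}, since also $\mathrm{Hom}(\mathbb{Q},\mathbb{Z}_{p^\infty})\neq 0$. So the paper's short argument is really only valid when $D(G)$ is torsion-free, whereas your longer argument -- split off $\mathbb{Q}^m$, compare torsion parts and torsion-free quotients separately, and prove that $T(G)=D_t\oplus R$ is IC prime by prime using full invariance of the divisible part, rank counting for the finite-rank quasi-cyclic piece, and the IC hypothesis on $R_p$ -- handles the general mixed case. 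That core reduction is sound: in each primary component $M_p=A_p\oplus B_p$ one has $D(M_p)=D(A_p)\oplus D(B_p)$ and $M_p/D(M_p)\cong (A_p/D(A_p))\oplus (B_p/D(B_p))$, divisible \emph{subgroups} do split off, and the two halves are then compared by rank and by the IC property of $R_p$ exactly as you describe.

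One step is mis-justified, although the conclusion survives. You assert that $0\to T(B)\to B\to B/T(B)\to 0$ splits ``since $B/T(B)$ is divisible.'' That principle is false in general: divisible subgroups split off, divisible quotients do not, since $\mathrm{Ext}(\mathbb{Q},T)\neq 0$ for any unbounded reduced torsion group $T$, so there exist non-split extensions of $\mathbb{Q}$ by torsion groups. In your situation the splitting is nevertheless true, and it follows from your own first paragraph rather than from any Ext computation: since the reduced part of the summand $B$ is torsion, $B=D(B)\oplus B_r$ with $B_r$ torsion, hence $B=D_f(B)\oplus T(B)$ where $D_f(B)$ is the torsion-free part of $D(B)$ and $D_f(B)\cong B/T(B)$. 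Replace the divisibility-of-the-quotient justification by this remark (and the analogous one for $B'$), and your proof is complete.
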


\begin{proof}
Under our assumption, we may write $G=D(G)\oplus T(G)$. Observe that $$Hom_\mathbb{Z}(divisible, reduced)=\{0\}=Hom_\mathbb{Z}(torsion, torsion\text{-}free).$$ Now, the claim follows automatically from
Lemma~\ref{hom}.
\end{proof}

One may emphasize that an absolute analogy to \cite[Proposition 11]{morphicab} is, however, {\it no} longer possible since as demonstrated above there is an unbounded, and thus an infinite, IC $p$-group.

\medskip



The following extra comments are worthwhile.

\begin{note}\label{0.1} (1) (see, for a more account, \cite[Nonexample 3.1.]{perspective-ab}) If $A$ is a module such that $A\ncong A\oplus A$ and $\alpha$
is an infinite ordinal. Then, both $A^{(\alpha)}$ and $A^{\alpha}$ are {\it not IC}.

(2) A direct summand of an IC module is too IC.
\end{note}

\begin{proof}
(1) One simple inspects that the subgroups $H=A^{(\alpha-1)}$ and $S=A^{(\alpha-2)}$ are isomorphic direct summands of $A^{(\alpha)}$, but it can plainly verified that $$A^{(\alpha)}/H\cong A\ncong A\oplus A\cong A^{(\alpha)}/S,$$ as required. For $A^{\alpha}$ the arguments are quite similar, so we omit the details.

(2) Suppose $G$ is an IC module, and write $G=A\oplus B$ and $A=K\oplus L=N\oplus F$, where $K\cong N$. Then, it must be that $$G=(K\oplus B)\oplus L=(N\oplus B)\oplus F.$$ Since $K\oplus B\cong N\oplus B$, one sees that $L\cong F$, as needed.
\end{proof}

Now, recall that, if the $p$-group $G$ is a direct sum of cyclic groups, then one may write that $G=\bigoplus_{n\geq 1}G_n$, where $G_n\cong\bigoplus_n \mathbb{Z}(p^n)$ are the {\it homogeneous} components of $G$.

\medskip

We now intend to establish the following two criteria.

\begin{prop}\label{0.2} (i) The fully decomposable torsion-free group $G$ is IC if, and only if, all homogeneous components of $G$ have finite ranks.

(ii) The direct sum of cyclic $p$-groups $G$ is IC if, and only if, all homogeneous components of $G$ have finite ranks.
\end{prop}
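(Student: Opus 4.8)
The plan is to treat both parts in parallel, writing $G=\bigoplus_\tau G_\tau$ for part (i) (resp. $G=\bigoplus_{n\ge 1}G_n$ for part (ii)) for the decomposition into homogeneous components, where $G_\tau$ collects the rank-one summands of a fixed type $\tau$ (resp. $G_n\cong\bigoplus\mathbb{Z}(p^n)$). For the necessity I would argue contrapositively. If some homogeneous component has infinite rank, say it is isomorphic to $A^{(\alpha)}$ with $A$ a rank-one torsion-free group (resp. $A=\mathbb{Z}(p^n)$) and $\alpha$ an infinite cardinal, then $A\ncong A\oplus A$ (the torsion-free ranks differ in the first case, the orders $p^n$ and $p^{2n}$ differ in the second), so $A^{(\alpha)}$ fails IC by Remark~\ref{0.1}(1). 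Since a homogeneous component is a direct summand of $G$, Remark~\ref{0.1}(2) then forces $G$ to be non-IC. Hence IC implies that every homogeneous component has finite rank.

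For the sufficiency the heart of the matter is an invariant-counting argument. Assuming every homogeneous component is finite, let $G=A\oplus B=A'\oplus B'$ with $A\cong A'$; the goal is $B\cong B'$. The first step is structural: each of $A,B,A',B'$ must again lie in the relevant class. In part (i) this is the statement that a direct summand of a completely decomposable group is completely decomposable; in part (ii) it follows from Kulikov's theorem, since a subgroup (hence a summand) of a direct sum of cyclic $p$-groups is again such a direct sum. To each such $H$ I would then attach its cardinal invariants, namely $r_\tau(H)$, the number of rank-one summands of type $\tau$ (resp. the Ulm--Kaplansky invariants $f_n(H)$, the number of cyclic summands of order $p^n$). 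By Baer's uniqueness theorem for completely decomposable groups (resp. the classical uniqueness for direct sums of cyclic $p$-groups), these invariants determine $H$ up to isomorphism and are additive over direct sums.

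I would then close by cardinal cancellation. From $G=A\oplus B=A'\oplus B'$ we obtain $r_\tau(G)=r_\tau(A)+r_\tau(B)=r_\tau(A')+r_\tau(B')$, while $A\cong A'$ gives $r_\tau(A)=r_\tau(A')$. Since $r_\tau(G)$ is precisely the (finite) rank of the homogeneous component $G_\tau$, all four terms are finite natural numbers, so subtraction is legitimate and yields $r_\tau(B)=r_\tau(B')$ for every $\tau$; hence $B\cong B'$. The identical computation with the invariants $f_n$ settles part (ii).

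The step I expect to be the main obstacle is the structural first step of the sufficiency, i.e. verifying that direct summands remain completely decomposable (resp. direct sums of cyclic $p$-groups). In part (ii) Kulikov's theorem disposes of this cleanly, but in part (i), for groups of infinite rank, this is the genuinely delicate point, and it is exactly where the finiteness of the homogeneous components is used twice over: once to keep the summands within the completely decomposable theory where Baer's invariants apply, and, crucially, to license the cardinal subtraction above, which would break down for infinite ranks (where $\kappa+m=\kappa+m'$ does not force $m=m'$). This also clarifies why the finite-rank hypothesis is not merely a convenience but the precise dividing line in both statements.
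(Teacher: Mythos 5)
Your proof is correct, and its skeleton matches the paper's: necessity is argued exactly as in the paper (an infinite-rank homogeneous component is of the form $A^{(\alpha)}$ with $A\ncong A\oplus A$, so Remark~\ref{0.1} kills IC), and sufficiency comes down to counting rank invariants per type (resp.\ per order $p^n$) and cancelling finite cardinals. Where you genuinely diverge is in the key lemma supporting the sufficiency. The paper does not pass through summand-closure plus classification; instead it invokes the property of \emph{isomorphic extensions of direct decompositions} (Fuchs, \S 86, Exercise 8 for the torsion-free case, and Corollary 18.2 of \cite{F} for direct sums of cyclic groups): given $G=K\oplus L=N\oplus F$ and the homogeneous decomposition $G=\bigoplus_i G_i$, this property directly yields $L=\bigoplus_i L_i$ and $F=\bigoplus_i F_i$ with $L_i$, $F_i$ isomorphic to summands of $G_i$, after which $r(L_i)=r(F_i)$ (finite) forces $L_i\cong F_i$ componentwise. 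You instead invoke the Baer--Kulikov--Kaplansky theorem (direct summands of completely decomposable groups are completely decomposable) and Kulikov's subgroup theorem, plus Baer's classification by types and the Ulm--Kaplansky invariants, and then cancel invariant-by-invariant. The two packages are close relatives --- the refinement property is proved in Fuchs from essentially the same machinery --- but your version is more modular: it isolates exactly where the deep structural theorem enters (and you correctly flag part (i) as the delicate case), and it makes the role of finiteness transparent as licensing subtraction of natural numbers, which is also the paper's implicit point when it deduces $r(L_i)=r(F_i)$ from $K\cong N$. The paper's route is slightly more economical in citation, handling both parts with a single refinement-type statement rather than a summand theorem plus a uniqueness theorem for each class.
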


\begin{proof}
(i) The necessity follows at once from Remark~\ref{0.1}.

As for the sufficiency, write $$G=K\oplus L=N\oplus F,$$ where $K\cong N$. As noted above, we have $G=\bigoplus_{i\in I}G_i$, where all $G_i$ are the homogeneous components of $G$. As the fully decomposable groups have the property of isomorphic extensions of direct decompositions (cf. \cite[\S 86, Exersize 8]{F}), it thus follows that $L=\bigoplus_{i\in I}L_i$ and $F=\bigoplus_{i\in I}F_i$, where both $L_i$ and $F_i$ are isomorphic to corresponding direct summands in $G_i$. Since $K\cong N$, $r(L_i)=r(F_i)$, so it must be that $L_i\cong F_i$ for all $i$, whence $L\cong F$, as pursued.

The proof of point (ii) is pretty analogous to that of point (i), because the direct sum of cyclic groups also has the property of isomorphic extensions of direct decompositions (see, e.g., \cite[Corollary 18.2]{F}).
\end{proof}

As a valuable consequence, we yield:

\begin{cor}
Any fully decomposable torsion-free group of finite rank is IC. Additionally, IC is also each finite group, and any IC bounded group is finite.
\end{cor}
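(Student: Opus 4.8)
The plan is to derive all three assertions from Proposition~\ref{0.2}, supplemented by the standard structure theory for bounded and finite abelian groups; no new machinery should be required beyond careful bookkeeping.

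For the first assertion, I would observe that a fully decomposable torsion-free group of finite rank $r$ is a finite direct sum of rank-$1$ torsion-free groups. Grouping the summands of the same type into homogeneous components, each such component has rank at most $r$, hence finite rank. Proposition~\ref{0.2}(i) then applies verbatim to yield the IC property.

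For the second assertion, I would use that every finite abelian group is, by the primary decomposition together with the basis theorem for finite abelian groups, a direct sum of finitely many cyclic $p$-groups. Consequently each of its homogeneous components is a finite direct sum of copies of some $\mathbb{Z}(p^n)$ and therefore has finite rank, so Proposition~\ref{0.2}(ii) gives that the group is IC.

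The third assertion is the one demanding the most care, and it is where I expect the real work to lie. First I would decompose a bounded group $G$ into its primary components $G=\bigoplus_p G_p$; since $G$ is bounded, only the primes dividing the bound can occur, so this sum is finite and each $G_p$ is itself a bounded $p$-group and hence, by Pr\"ufer's theorem, a direct sum of cyclic $p$-groups. Assuming $G$ is IC, every $G_p$ inherits the IC property as a direct summand by Remark~\ref{0.1}(2), so Proposition~\ref{0.2}(ii) forces all homogeneous components of $G_p$ to have finite rank. The crux is then to combine two separate finiteness inputs: boundedness caps the exponents $p^n$ that can appear, so each $G_p$ has only finitely many homogeneous components, while Proposition~\ref{0.2}(ii) bounds the rank within each one; a finite-rank homogeneous component of bounded exponent is finite, whence each $G_p$ is finite and $G$, a finite direct sum of finite groups, is finite. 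The main obstacle is precisely this bookkeeping step: one must invoke boundedness twice---once to limit the number of primary components and the range of exponents, and once more (through Proposition~\ref{0.2}(ii)) to bound the rank inside each homogeneous component---in order to pass legitimately from ``every homogeneous component is finite'' to ``the whole group is finite,'' ruling out the possibility of infinitely many finite pieces accumulating.
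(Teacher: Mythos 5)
Your proposal is correct and takes essentially the same route as the paper: the corollary is stated there as an immediate consequence of Proposition~\ref{0.2}, and your argument simply fills in the routine details (finite rank forces finite-rank homogeneous components; boundedness plus Pr\"ufer's theorem reduces the third assertion to Proposition~\ref{0.2}(ii) applied to each primary summand). The only cosmetic point is that in the second assertion Proposition~\ref{0.2}(ii) is stated for a single prime, so one should formally pass from the IC $p$-components to the whole finite group via Proposition~\ref{torsion} or Lemma~\ref{hom}, exactly as the paper itself does in the remark following the corollary.
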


In Proposition~\ref{0.2} (ii) we described IC groups which are direct sums of cyclic $p$-groups. That is why, it follows from Lemma~\ref{hom} and Proposition~\ref{torsion} that an arbitrary direct sum of cyclic groups is IC if, and only if, its torsion part has finite rank and each $p$-component is IC.

\medskip

We now proceed by showing the following interesting note.

\begin{note}\label{torfree}
Every torsion-free group of rank 2 is IC since it is either decomposable or indecomposable. But there exists a torsion-free group of rank 3 which is {\it not} IC. Indeed, an appeal to \cite[Theorem 90.4]{F} gives that, for every integer $m\geq 3$, there is a torsion-free group $G$ of rank 3 such that $G=B_i\oplus C_i$ with $i=1,\dots, m$, where all $B_1,\dots, B_m$ are isomorphic groups of rank 1, and all $C_1,\dots, C_m$ are pair-wise non-isomorphic indecomposable groups of rank 2.
\end{note}

Note that, for every IC $p$-group $G$, it follows that each Ulm invariant $f_n(G)$ is finite for $n<\omega$; in particular, $|G|\leq 2^{\aleph_0}$.

\medskip

We are now intending to establish a series of characteristic assertions.

\begin{prop}
Let $G$ be a reduced countable $p$-group having finite all Ulm-Kaplansky invariants. Then, $G$ is IC.
\end{prop}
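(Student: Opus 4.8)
The plan is to reduce the IC property to Ulm's classification theorem for countable reduced $p$-groups. Recall that two countable reduced $p$-groups are isomorphic if, and only if, they share the same Ulm--Kaplansky invariants $f_\sigma(\cdot)$ for every ordinal $\sigma$ (see \cite{F}). So, given a decomposition $G = A \oplus B = A' \oplus B'$ with $A \cong A'$, my goal will be to show that $B$ and $B'$ have identical Ulm invariants and then invoke this theorem to deduce $B \cong B'$.

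First, I would observe that every direct summand of $G$ is again a countable reduced $p$-group: a summand of a $p$-group is a $p$-group, a summand of a countable group is countable, and a summand of a reduced group is reduced (any divisible subgroup of the summand would be a divisible subgroup of $G$, hence trivial). Consequently, Ulm's theorem is applicable to each of $A$, $B$, $A'$, $B'$, and moreover each of their Ulm invariants is bounded above by the corresponding (finite) invariant of $G$, so all the invariants in play are finite.

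Next, the crucial computational input is the additivity of the Ulm--Kaplansky invariants over direct sums, namely $f_\sigma(X \oplus Y) = f_\sigma(X) + f_\sigma(Y)$ for all ordinals $\sigma$. Applying this to both decompositions gives $f_\sigma(A) + f_\sigma(B) = f_\sigma(G) = f_\sigma(A') + f_\sigma(B')$ for every $\sigma$. Since $A \cong A'$ forces $f_\sigma(A) = f_\sigma(A')$, I would then cancel these equal terms to conclude $f_\sigma(B) = f_\sigma(B')$ for all $\sigma$, and therefore $B \cong B'$ by Ulm's theorem, which is exactly the IC property.

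The main obstacle --- and the place where the hypothesis is genuinely used --- is the cancellation step: from $f_\sigma(A) + f_\sigma(B) = f_\sigma(A') + f_\sigma(B')$ together with $f_\sigma(A) = f_\sigma(A')$ one infers $f_\sigma(B) = f_\sigma(B')$. This inference is valid \emph{only} because all the invariants are finite; for infinite cardinals, addition is not cancellative and the conclusion can fail, which is precisely the mechanism behind the non-IC examples with infinite invariants alluded to in Remark~\ref{0.1}. Thus the finiteness of all $f_\sigma(G)$ is exactly what powers the argument, and I would stress this point rather than any delicate structural manipulation of the groups themselves.
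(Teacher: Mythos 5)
Your proposal is correct and follows essentially the same route as the paper's own proof: both reduce the claim to Ulm's theorem for countable reduced $p$-groups (Theorem 77.3 in \cite{F}), using the finiteness of all Ulm--Kaplansky invariants to conclude that the complementary summands have equal invariants and hence are isomorphic. Your write-up merely makes explicit the additivity-and-cancellation step that the paper leaves implicit, which is a fair clarification but not a different argument.
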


\begin{proof}
Write $G=K\oplus L=N\oplus F$, where $K\cong N$. In view of the finiteness of $f_{\alpha}(L)=f_{\alpha}(F)$, holding for every ordinal number $\alpha$, and in virtue of the countability of $G$, one readily follows from \cite[Theorem 77.3]{F} that $L\cong F$, as required.
\end{proof}

\begin{prop}
The reduced torsion-complete $p$-group $G$ is IC if, and only if, every its $f_n(G)$ is finite for any $n<\omega$.
\end{prop}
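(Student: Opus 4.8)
The forward implication is already in hand: by the observation recorded just above, every IC $p$-group has all its Ulm--Kaplansky invariants $f_n(G)$ finite for $n<\omega$, so in particular a torsion-complete IC group automatically satisfies the stated finiteness condition. Thus the whole content lies in the converse, and the plan is to reduce it to the classification of torsion-complete groups by their invariants. So assume $G$ is reduced torsion-complete with $f_n(G)<\infty$ for every $n<\omega$, and suppose $G=K\oplus L=N\oplus F$ with $K\cong N$; the goal is to produce $L\cong F$.

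The first step I would take is to observe that $K$, $L$, $N$ and $F$ are themselves (reduced) torsion-complete. For this I would invoke the structural theorem (see \cite{F}) that a direct summand of a torsion-complete $p$-group is again torsion-complete. This is the input that makes the whole argument run, and it is the step I regard as the genuine obstacle: torsion-completion does not commute with infinite direct sums, so the summand-closure of the class of torsion-complete groups is a non-formal fact rather than something one checks by hand.

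The second step is to pass to invariants. Since the Ulm--Kaplansky invariants are additive over finite direct sums, the two decompositions of $G$ give
$$f_n(K)+f_n(L)=f_n(G)=f_n(N)+f_n(F)\qquad(n<\omega).$$
Because $K\cong N$ we have $f_n(K)=f_n(N)$ for all $n$, and because every $f_n(G)$ is finite each summand above is a finite cardinal; hence I may cancel $f_n(K)=f_n(N)$ to conclude $f_n(L)=f_n(F)$ for all $n<\omega$. Here the finiteness hypothesis is indispensable: for infinite invariants the cancellation $\kappa+\lambda=\kappa+\mu$ does not force $\lambda=\mu$, which is precisely why unbounded torsion-complete groups carrying an infinite $f_n$ fail to be IC.

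Finally, both $L$ and $F$ are reduced torsion-complete groups (as summands of the reduced torsion-complete $G$) with identical invariants $f_n(L)=f_n(F)$ for every $n<\omega$. Appealing to the classification theorem that a torsion-complete $p$-group is determined up to isomorphism by its basic subgroup, equivalently by the invariants $f_n$ with $n<\omega$ (see \cite{F}), I would conclude $L\cong F$, which shows that $G$ is IC and completes the proof.
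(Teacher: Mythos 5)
Your proposal is correct and follows essentially the same route as the paper's own (much terser) proof: cancel the finite Ulm--Kaplansky invariants across the two decompositions to get $f_n(L)=f_n(F)$ for all $n<\omega$, then invoke the fact that reduced torsion-complete $p$-groups are determined up to isomorphism by their basic subgroups. Your version merely makes explicit the supporting facts the paper leaves implicit (summand-closure of torsion-completeness, additivity of the invariants, and the necessity direction via the finiteness of $f_n$ for IC $p$-groups), all of which are correctly invoked.
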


\begin{proof}
Write $G=K\oplus L=N\oplus F$, where $K\cong N$. Since $f_{n}(L)=f_{n}(F)$, it follows that the basic subgroups of $L$ and $F$ are isomorphic, and hence isomorphic are also the whole groups $L$ and $F$ (see, e.g., \cite{F2}).
\end{proof}

Recall that, if $G$ is a torsion-free group, then its $p$-rank $r_p(G)$ is exactly the rank of the factor-group
$G/pG$.

\begin{prop}
The reduced algebraically compact torsion-free group $G$ is IC if, and only if, for each prime $p$, the $p$-rank
$r_p(G)$ is finite.
\end{prop}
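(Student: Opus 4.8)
The plan is to lean on the classical structure theorem for reduced algebraically compact torsion-free groups (cf. \cite{F}): writing $J_p$ for the ring of $p$-adic integers, every such $G$ decomposes canonically as a product $G=\prod_p A_p$ of its $p$-adic components, where each $A_p$ is the $p$-adic completion $\widehat{J_p^{(\kappa_p)}}$ of a free $J_p$-module, and the invariant $\kappa_p$ equals $r_p(G)=\dim_{\mathbb{F}_p}(A_p/pA_p)$. The first thing I would record is that each $A_p$ is \emph{fully invariant} in $G$, because it is described purely by divisibility as $A_p=\bigcap_{q\ne p}\bigcap_{n\geq 1} q^nG$ (for $q\ne p$ multiplication by $q$ is an automorphism of $A_p$, while $A_q$ is $q$-reduced). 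Consequently every endomorphism of $G$ maps $A_p$ into $A_p$, so that direct decompositions and isomorphisms between summands of $G$ automatically respect the $p$-adic components.

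For the necessity I would argue contrapositively. Suppose some $r_p(G)=\kappa_p$ is infinite. Since $p$-adic completion commutes with finite direct sums and $\kappa_p+\kappa_p=\kappa_p$, one gets $A_p\oplus A_p\cong\widehat{J_p^{(\kappa_p)}\oplus J_p^{(\kappa_p)}}\cong\widehat{J_p^{(\kappa_p)}}=A_p$. Thus $A_p\cong A_p\oplus A_p$ with $A_p\ne 0$, so $A_p$ is not directly finite; by the observation preceding Proposition~\ref{div} it is therefore not IC. As $A_p$ is a direct summand of $G=A_p\oplus\prod_{q\ne p}A_q$, Remark~\ref{0.1}(2) forces $G$ itself to fail IC.

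For the sufficiency, assume all $r_p(G)$ are finite and write $G=K\oplus L=N\oplus F$ with $K\cong N$. Since $q^nK=K\cap q^nG$ for a summand $K$, the $p$-component of $K$ is exactly $K_p:=K\cap A_p$, and full invariance gives $A_p=K_p\oplus L_p=N_p\oplus F_p$ together with $K=\prod_p K_p$, $L=\prod_p L_p$ (and likewise for $N,F$); moreover the isomorphism $K\cong N$ descends to $K_p\cong N_p$. Now $\kappa_p$ finite means $A_p\cong J_p^{\,r_p(G)}$ is free of finite rank over the discrete valuation ring $J_p$; as $\mathrm{End}_{\mathbb{Z}}(A_p)=\mathrm{End}_{J_p}(A_p)$ (every endomorphism of the complete torsion-free group $A_p$ is $p$-adically continuous, hence $J_p$-linear), the group summands of $A_p$ are free $J_p$-modules whose ranks add up in direct sums. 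Hence $K_p\cong N_p$ forces $\mathrm{rk}\,L_p=\mathrm{rk}\,F_p$, so $L_p\cong F_p$; taking the product of these isomorphisms yields $L=\prod_p L_p\cong\prod_p F_p=F$, as required.

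The main obstacle is precisely the passage through the infinite product: cancellation is available only factorwise (each $A_p$ has finite rank), so the crux is to show that an arbitrary decomposition and isomorphism of $G$ descends to each $A_p$. This is exactly what the full invariance $A_p=\bigcap_{q\ne p}\bigcap_n q^nG$ delivers, the delicate point being that for a direct summand $K$ one indeed has $K=\prod_p(K\cap A_p)$ (a product, not merely a direct sum), which is what lets the finite-rank cancellation at each prime be assembled into the desired global isomorphism $L\cong F$.
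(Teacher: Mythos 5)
Your proof is correct and follows essentially the same route as the paper: both decompose $G=\prod_p G_p$ into its $p$-adic components, get necessity from infinite-rank cardinal arithmetic (your $A_p\cong A_p\oplus A_p$, hence not directly finite, versus the paper's explicit failure of cancellation $G_p=\widehat{\mathbb{Z}}_p^{+}\oplus K=\widehat{\mathbb{Z}}_p^{+}\oplus\widehat{\mathbb{Z}}_p^{+}\oplus C$ with $K\cong C$), and get sufficiency by cancelling finite ranks prime-by-prime and reassembling the product. Your careful justification that decompositions and isomorphisms descend to the components (via full invariance of $A_p=\bigcap_{q\ne p}\bigcap_n q^nG$ and $K=\prod_p(K\cap A_p)$) spells out a step the paper leaves implicit behind its citation of basic subgroups, but the substance is identical.
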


\begin{proof}
{\bf Necessity.} Write $G=\prod_pG_p$, where each $G_p$ is the $p$-adic component of $G$ which is a $p$-adic module.
Notice that the $p$-rank of $G$ coincides with the $p$-rank of $G_p$, and thus it coincides with the rank of a basic submodule of $G_p$ as a module over the ring of integer $p$-numbers $\mathbb{\widehat{Z}}_p$.

If, however, its $p$-rank is infinite, then one writes that
$$G_p=\mathbb{\widehat{Z}}^+_p\oplus K=\mathbb{\widehat{Z}}^+_p\oplus\mathbb{\widehat{Z}}^+_p\oplus C,$$
where $K\cong C$, which fact is manifestly imposable for IC modules, thus substantiating our claim.

{\bf Sufficiency.} Write $G=K\oplus L=N\oplus F$, where $K\cong N$. As already observed above, the finiteness of the $p$-rank for each $G_p$ insures that the basic subgroups of $L_p$ and $F_p$ are isomorphic, whence isomorphic are $L$ and $F$ as well (cf. \cite{F2}).
\end{proof}

Recall that any vector torsion-free group $G$ can be written as $G=\prod_t G_t$, where all direct components $G_t$
are the direct product of groups of rank 1 and type $t$ (see, for instance, \cite{N}).

\begin{prop}
The reduced vector torsion-free group $G$ is IC if, and only if, all its direct components $G_t$ have finite rank.
\end{prop}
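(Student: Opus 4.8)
The plan is to treat the two implications separately, exactly in the spirit of the preceding proposition for reduced algebraically compact torsion-free groups, so that the type-components here play the role previously played by the $p$-adic components.

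For the necessity, I would argue by contraposition using Remark~\ref{0.1}. Writing $G=\prod_t G_t$, each component $G_t$ is a direct summand of $G$, and---since in a vector group all rank-$1$ groups sharing the type $t$ are mutually isomorphic to a single fixed group $A_t$---one has $G_t\cong A_t^{\kappa_t}$, where $\kappa_t$ is the rank of $G_t$. If some $G_t$ has infinite rank, then $\kappa_t$ is an infinite cardinal and, because $A_t$ has rank $1$ and hence $A_t\ncong A_t\oplus A_t$, part (1) of Remark~\ref{0.1} shows that $G_t=A_t^{\kappa_t}$ fails to be IC. Part (2) of the same remark then forces $G$ itself to fail IC, which is precisely the required contrapositive.

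For the sufficiency, assume every $G_t$ has finite rank and write $G=K\oplus L=N\oplus F$ with $K\cong N$; the goal is to deduce $L\cong F$. First I would record that each $G_t$, being a finite-rank homogeneous completely decomposable group $A_t^{n_t}$, is IC by Proposition~\ref{0.2}(i). The substantive step is then to pass from the global decomposition to the type-components: applying the type-$t$ constituent functor $X\mapsto X(t)/X^{*}(t)$ (which extracts the homogeneous part of type $t$), the equalities $G=K\oplus L=N\oplus F$ should induce $G_t=K_t\oplus L_t=N_t\oplus F_t$ with $K_t\cong N_t$ for every type $t$. Since each $G_t$ is IC, this yields $L_t\cong F_t$ for all $t$, and reassembling over the well-separated, finite-rank components gives $L\cong\prod_t L_t\cong\prod_t F_t\cong F$, as wanted.

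The hardest point is the middle step of the sufficiency. One must guarantee that the arbitrary summands $K,L,N,F$ of the infinite product $G$ genuinely respect the type decomposition, i.e.\ that the type-$t$ functor is additive and faithful on direct products of rank-$1$ groups and that a direct summand of a vector group is again assembled coordinate-wise from its type-components $G_t$. This is the genuinely technical ingredient, and it is where the structure theory of vector groups (in the reference cited above for such groups) has to be invoked; the behaviour of the type functor on unrestricted products, rather than on direct sums, is the delicate matter. Once this compatibility is secured, the reduction to the finite-rank homogeneous IC components and the final reassembly are routine.
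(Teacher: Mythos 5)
Your necessity argument is fine (and in fact more explicit than the paper, whose proof only treats sufficiency): each $G_t$ is a direct summand of $G$ isomorphic to $A_t^{\kappa_t}$ for a rank-one group $A_t$, and since $A_t\ncong A_t\oplus A_t$, Remark~\ref{0.1} applies. The trouble is in the sufficiency, at exactly the point you flag and then leave open. Your plan needs every direct summand of $G$ to be assembled from its type components, so that $L\cong\prod_t L_t$ and $F\cong\prod_t F_t$; but the mechanism you propose, the functor $X\mapsto X(t)/X^{*}(t)$, does not behave well on unrestricted products: the characteristic of an element of $\prod_t G_t$ is the infimum of its coordinate characteristics, so an element with infinitely many nonzero coordinates can have type strictly below every type occurring in the product, and $G(t)$ is in general not $\prod_{s\geq t}G_s$. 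So this step is not merely ``technical''; as proposed it would fail, and no substitute argument is supplied.

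The paper closes this gap with a cardinality argument you do not have: since all $G_t$ have finite rank, $|G|\leq 2^{\aleph_0}$, hence $G$ is non-measurable, and for non-measurable vector groups it is a theorem (see \cite[Theorem 5.6]{F2} or \cite{N}) that every direct summand is again a vector group. This makes $K,L,N,F$ themselves products of their type components, and then \cite[Theorem 5.5]{F2} yields $G_t\cong K_t\oplus L_t\cong N_t\oplus F_t$; the finite-rank homogeneous components (which are IC, as you correctly note via Proposition~\ref{0.2}) give $L_t\cong F_t$ for every $t$, whence $L\cong F$. So your outline coincides with the paper's, but the decisive ingredient --- non-measurability forcing summands of vector groups to be vector groups --- is absent from your write-up and cannot be replaced by the type-functor argument you sketch.
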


\begin{proof}
Since each $G_t$ has finite rank, it must be that $|G|\leq 2^{\aleph_0}$, and so $G$ is non-measurable, which enables us that every direct summand of $G$ also is a vector group (see, respectively, \cite[Theorem 5.6]{F2} or \cite{N}).
Therefore, if we write $$G=K\oplus L=N\oplus F,$$ where $K\cong N$, then $$K=\prod_tK_t, L=\prod_tL_t, N=\prod_tN_t,
F=\prod_tF_t.$$
However, thanks to \cite[Theorem 5.5]{F2}, we obtain that $$G_t\cong K_t\oplus L_t\cong N_t\oplus F_t.$$
Furthermore, with the condition on finite ranks at hand, we arrive at $L_t\cong F_t$ for all $t$, so that it is immediate $L\cong F$, as wanted.
\end{proof}

\begin{note}
Recall that the group $B$ has the cancelation property (that is, a {\it CP group}) if, for all groups $H,K$, the isomorphism $B\oplus H\cong B\oplus K$ forces the isomorphism $H\cong K$. This is obviously equivalent to the next property: for all groups $H,K$, from the equality $B\oplus H=C\oplus K$, the isomorphism $B\cong C$ yields the isomorphism $H\cong K$ (see the commentaries before \cite[Theorem 90.4]{F}). However, looking at \cite[\S15, Exersize 24]{F}, one finds that all finitely generated groups are CP. Moreover, all finitely co-generated groups also are CP.
This simply leads to the fact that the quasi-cyclic groups are CP, but we may directly derive that from Proposition~\ref{07} quoted below.
\end{note}

We now would like to indicate the following elementary, but pivotal properties.

\medskip

(1) A direct summand of a CP group $A$ is too CP.

Indeed, if $A=B\oplus C$ and $B\oplus H\cong B\oplus K$, then $$(B\oplus C)\oplus H\cong (B\oplus C)\oplus K,$$
so $H\cong K$, as required.

\medskip

(2) If $A$ is a CP group, then $A^n$ also is a CP group for every integer number $n\geq 2$.

We argue by induction: if $$(A\oplus A)\oplus H\cong(A\oplus A)\oplus K,$$ then $$A\oplus (A\oplus H)\cong A\oplus (A\oplus K),$$ and thus $A\oplus H\cong A\oplus K$ yielding $H\cong K$, as needed.

\medskip

Even something more, if $H_1$ and $H_2$ are both CP groups, then $H_1\oplus H_2$ also is CP, i.e., the class consisting of CP groups is closed under formation of finite direct sums. To substantiate this, it is pretty clear that it is enough to settle the property only for two direct summands: to that end, supposing $H_1\oplus H_2\oplus K\cong H_1\oplus H_2\oplus M$, then $H_2\oplus K\cong H_2\oplus M$. Thus, $K\cong M$, that is, $H_1\oplus H_2$ is CP, as required.

\medskip

(3) Same as Remark~\ref{0.1}, it can be proven that, if $A\ncong A\oplus A$, then both $A^{(\alpha)}$ and $A^{\alpha}$ are not CP, provided the cardinal $\alpha$ is infinite. Indeed, if $A$ is a non-zero CP group, then it must be that $A\oplus A\ncong A$ -- in fact, if $A\oplus A\cong A$, then $A\oplus A\cong A\oplus\{0\}$, so that $A\cong\{0\}$, a contradiction, as suspected.

\medskip

(4) Any CP group $A$ is IC.

Indeed, if we write $$A=B\oplus C=H\oplus F,$$ where $B\cong H$, then one follows that
$$(B\oplus C)\oplus C=(H\oplus F)\oplus C=(H\oplus C)\oplus F,$$
where $$B\oplus C\cong H\oplus C\cong A,$$ forcing $C\cong F$.

\begin{note}\label{notenew}
Not every rank 1 torsion-free group is CP (see, for example, Remark~\ref{torfree}), so Proposition~\ref{0.2} gives the desired example of a IC group that is {\it not} CP. In fact, each torsion-free group of finite rank is CP in the class of torsion-free groups of rank 1, as indicated in the second bullet below.
\end{note}

We now proceed with the following two more relevant observations:

\medskip

\noindent $\bullet$ ~ If $A$ and $B$ are isomorphic divisible subgroups of some group $G$ with $A\cap B=\{0\}$, then $A$ and $B$ have common addition in $G$. Indeed, we can write $G=(A\oplus B)\oplus C$. If $f: A\to B$ is the existing isomorphism, then one may write that $A\oplus B=A\oplus H=B\oplus H$, where $H=\{a+f(a)\,|\,a\in A\}$, whence it must be that $$G=B=A\oplus H\oplus C=B\oplus H\oplus C,$$ as suspected.

\medskip

\noindent $\bullet$ ~ Every torsion-free group $G$ of finite rank is CP in the class of torsion-free groups of rank 1. Indeed, if we suppose that $K\oplus G\cong H\oplus A$, where $G\cong A$ and $K$, $H$ are torsion-free of rank 1, then we know that $K\cong H$ (cf. \cite[\S 90, Exersize 13]{F}). But, as already observed above, not every torsion-free group of finite rank is TP.

\medskip

We are now preparing to describe all perspective torsion groups having divisible part with each primary component of finite rank and reduced part with each primary component of finite Ulm-Kaplansky invariants.

\begin{thm} A reduced countable $p$-group is perspective if, and only if, it has finite all Ulm-Kaplansky invariants.
\end{thm}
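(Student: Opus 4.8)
The plan is to route both directions through Ulm's classification theorem for countable reduced $p$-groups, which says that two such groups are isomorphic exactly when they carry the same Ulm--Kaplansky invariants $f_\alpha(-)$, together with the additivity $f_\alpha(X\oplus Y)=f_\alpha(X)+f_\alpha(Y)$ of these invariants over direct sums. These two facts let me translate every statement about summands into a statement about finite (or infinite) cardinals attached to each height $\alpha$.

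For the necessity I would use that every perspective group is IC, so it suffices to prove that an IC countable reduced $p$-group has all $f_\alpha(G)$ finite. The range $\alpha<\omega$ is precisely the remark recorded just before the statement. For an arbitrary ordinal $\alpha$ with $f_\alpha(G)=\aleph_0$, I would exploit the indeterminacy of $\aleph_0-\aleph_0$: the infinite invariant at height $\alpha$ supplies enough independent socle elements of height $\alpha$ to split $G$ in two incompatible ways, yielding a decomposition $G=K\oplus A=L\oplus B$ with $K\cong L$ (each absorbing $f_\alpha=\aleph_0$) yet with $f_\alpha(A)\ne f_\alpha(B)$, so that $A\not\cong B$ by Ulm's theorem. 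This violates IC and hence forces every $f_\alpha(G)$ to be finite.

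For the sufficiency, let $K\cong L$ be direct summands and write $G=K\oplus A=L\oplus B$. Additivity together with $f_\alpha(K)=f_\alpha(L)$ and the finiteness hypothesis gives $f_\alpha(A)=f_\alpha(G)-f_\alpha(K)=f_\alpha(G)-f_\alpha(L)=f_\alpha(B)$ for every $\alpha$, all the quantities being finite, so $A\cong B$ by Ulm's theorem. The block map $\phi\oplus\psi$ built from isomorphisms $\phi\colon K\to L$ and $\psi\colon A\to B$ is then an automorphism of $G$ sending $K$ to $L$; but $K$ and $L$ lying in one $\mathrm{Aut}(G)$-orbit does \emph{not} by itself produce a common complement, so more is needed. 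I would instead parametrize the complements of $K$ in $G=K\oplus A$ as graphs $N_g=\{a+g(a):a\in A\}$ with $g\in\mathrm{Hom}(A,K)$ and solve for a single $g$ making $N_g$ complement $L$ as well; the matching invariants of $K,L$ and of $A,B$, read off layer by layer on each $p^\alpha G[p]$, should supply exactly the room to carry out this solution and thereby witness $K\overset{P}{\sim}L$.

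The main obstacle is this last step of the sufficiency: upgrading the bare isomorphism $A\cong B$ to an actual submodule $N$ complementing both $K$ and $L$. Counting invariants is routine, whereas a common complement is a strictly finer datum, and I expect its construction to require building a height-preserving isomorphism on the socles and then lifting it through the transfinite Ulm filtration---precisely where the countability of $G$ and the finiteness of every $f_\alpha(G)$ become indispensable. The infinite-ordinal case of the necessity is a milder instance of the same difficulty, where the same filtration is used in reverse to manufacture the IC-violating pair rather than to reconcile two decompositions.
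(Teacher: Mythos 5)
Your sufficiency argument stops exactly where the theorem begins. Ulm's theorem plus additivity of the Ulm--Kaplansky invariants does give $A\cong B$, but that is only the IC property, and IC is strictly weaker than perspectivity (the paper's own example $\mathbb{Z}\oplus\mathbb{Z}$ is IC yet not perspective), so the step you defer is not a technicality --- it is the whole content of the statement. You candidly describe the plan of solving for $g\in\mathrm{Hom}(A,K)$ so that the graph $N_g=\{a+g(a):a\in A\}$ complements $L$ as well, and say the matching invariants ``should supply exactly the room'' to do so; but no construction is given, and this is precisely what must be proved. The paper closes this gap with one external ingredient that your proposal never invokes: a $p$-group all of whose Ulm--Kaplansky invariants are finite has the \emph{substitution property} (Crawley \cite{C65}; Fuchs \cite{F71}, \cite[Problem 58]{F2}). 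Granting that, perspectivity is formal: write $G=K\oplus A=L\oplus B$ with $K\cong L$; since $K$ has finite invariants it substitutes, yielding $D\le G$ with $G=D\oplus A=D\oplus B$, whence $A\cong B$; then $A$ (which also has finite invariants, being a summand) substitutes in the same pair of decompositions, yielding $U\le G$ with $G=K\oplus U=L\oplus U$, i.e. $K\overset{P}{\sim}L$. Your graph-of-homomorphism idea is in effect an attempt to reprove this substitution theorem from scratch; unless that is carried out (it requires the transfinite height-preserving constructions you allude to, and is a theorem in its own right), the sufficiency direction remains unproved.

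There is also a smaller unaddressed point in your necessity sketch. For an infinite ordinal $\alpha$ with $f_\alpha(G)=\aleph_0$, the two ``incompatible'' decompositions cannot be produced just by letting $K$ absorb the infinite invariant and assigning different values of $f_\alpha$ to $A$ and $B$: by Zippin's existence theorem, the Ulm function of a nonzero reduced countable $p$-group must have support cofinal in every limit ordinal up to its length, so a summand whose only nonzero invariant sits at an infinite $\alpha$ simply does not exist. One must additionally split the (cofinal) support of $f$ below $\alpha$ into two cofinal pieces and distribute them between the summands before Ulm's theorem can realize both decompositions. To be fair, the paper itself dismisses necessity as ``manifestly evident'' and records none of this, so on that direction your sketch, while incomplete, is more explicit than the paper's; the decisive defect of the proposal is the missing substitution argument in the sufficiency.
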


\begin{proof} Necessity is manifestly evident, so we leave out the details.

To substantiate sufficiency, let us write $G=A\oplus B=C\oplus K$, where $A\cong B$. Notice that any $p$-group with finite all Ulm-Kaplansky invariants always possesses the substation property (cf. \cite{F71}, \cite[Problem 58]{F2}, or \cite{C65}). Since $A$ also possesses finite all Ulm-Kaplansky invariants, it has the substation property, so that $G=C\oplus B=C\oplus K$ for some $C\leq G$. In particular, it must be that $B\cong K$, and hence
we freely write $G=A\oplus U=C\oplus U$ for some $U\leq G$, as expected.
\end{proof}

We terminate this section with the following statement.

\begin{prop}\label{07}
If $B\oplus G=C\oplus H$, where both $B,C$ are isomorphic to divisible IC groups, then $G\cong H$.
\end{prop}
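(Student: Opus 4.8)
The plan is to recognize that Proposition~\ref{07} asserts exactly that a divisible IC group is a cancellation (CP) group, in the equivalent formulation recorded before the statement: from $B\oplus G=C\oplus H$ with $B\cong C$ one wants $G\cong H$. (The hypothesis must be read as $B\cong C$, not merely that each of $B,C$ is \emph{some} divisible IC group, since otherwise taking $M=\mathbb{Q}\oplus\mathbb{Z}(p^\infty)$ with $B=\mathbb{Q}$, $C=\mathbb{Z}(p^\infty)$ would give a counterexample.) So write $M=B\oplus G=C\oplus H$ for the ambient group and let $D(M)$ be its maximal divisible subgroup. Since $B,C$ are divisible they lie in $D(M)$, and as the maximal divisible subgroup of a direct sum splits accordingly, $D(M)=B\oplus D(G)=C\oplus D(H)$.

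First I would separate the reduced and the divisible behavior. Because $B\subseteq D(M)$, we have $M=D(M)+G$ and $G\cap D(M)=D(G)$, so the second isomorphism theorem gives $M/D(M)\cong G/D(G)$; symmetrically $M/D(M)\cong H/D(H)$. Hence the reduced parts satisfy $G/D(G)\cong H/D(H)$, and this half of the desired isomorphism comes essentially for free, with no use of the IC hypothesis.

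Next I would handle the divisible parts, which is where the IC property is used. From $B\oplus D(G)=D(M)=C\oplus D(H)$ together with $B\cong C$ I must deduce $D(G)\cong D(H)$, i.e. cancel $B$ among divisible groups. For this I would invoke the classification of divisible groups by the torsion-free rank $r_0$ and the $p$-ranks $r_p$, under which forming a direct sum adds these invariants cardinally. By Proposition~\ref{div}, a divisible IC group has \emph{all} of these invariants finite. Comparing invariants on the two sides and using that a finite cardinal cancels from cardinal sums (if $n<\infty$ then $n+\kappa=n+\lambda$ forces $\kappa=\lambda$) yields $r_0(D(G))=r_0(D(H))$ and $r_p(D(G))=r_p(D(H))$ for every prime $p$, whence $D(G)\cong D(H)$.

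Finally, since the maximal divisible subgroup always splits, $G\cong D(G)\oplus\bigl(G/D(G)\bigr)$ and $H\cong D(H)\oplus\bigl(H/D(H)\bigr)$; combining the two isomorphisms obtained above gives $G\cong H$. The main obstacle — and the only genuine use of the hypotheses — is the divisible-part cancellation: cardinal cancellation fails for infinite cardinals, so without the finiteness of the invariants of $B$ guaranteed by the IC property (Proposition~\ref{div}) the argument would break down, which is consistent with the fact that infinite-rank divisible groups are not cancellable.
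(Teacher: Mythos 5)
Your proof is correct and follows essentially the same route as the paper's: both split the ambient group into its divisible and reduced parts, cancel $B$ from the divisible part using the finiteness of the invariants of divisible IC groups guaranteed by Proposition~\ref{div}, and observe that the reduced parts of $G$ and $H$ are each isomorphic to the reduced part of $B\oplus G$. Your reading of the hypothesis as $B\cong C$ is indeed the one the paper's proof actually uses, and your write-up merely makes explicit the cardinal-cancellation step and avoids the paper's case split between $G$ reduced and non-reduced.
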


\begin{proof}
Note that $B,C$ are also divisible IC groups, and so all their $p$-primary components and their torsion-free parts must have finite (possibly different) ranks. In fact, if some of these ranks is supposed, for a moment, to be infinite, then there will be a situation when $B\oplus G=C\oplus\{0\}$, where $B,C,G\neq\{0\}$ and $B\cong C$ - an absurd. This unambiguously means that, if $G$ is reduced, then $H$ also has to be reduced and, therefore, in this case $G\cong H$; likewise, if $G,H$ are non-reduced, then their divisible parts $D(G)$ and $D(H)$ are isomorphic, so that if we write $G=D(G)\oplus R(G)$ and $H=D(H)\oplus R(H)$, then $R(G)$ and $R(H)$ are isomorphic to the reduced part of $B\oplus G$. Thus, $R(G)\cong R(H)$ and, consequently, $G\cong H$, as asked for.
\end{proof}

\section{Abelian groups satisfying transitive perspectivity}

We begin our work here by recalling some crucial facts as follows: First, we recall that a group satisfies the {\it weak perspectivity} property, or just saying that it is {\it weakly perspective}, that is {\it WP} for short, provided all of its isomorphic direct summands with isomorphic complements are perspective (see \cite{wp}). Moreover, a group $M$ is called {\it purely infinite}, provided $M\cong M\oplus M$.

\medskip

The following helpful observations are fulfilled.

\begin{note} (1) A group is perspective if, and only if, it is simultaneously WP and IC (\cite[Remark 8]{wp});
we emphasize that $\mathbb{Z}\oplus\mathbb{Z}$ is IC but {\it not} WP.

(2) If $M$ is a non-zero purely infinite group, then $M$ is {\it not} WP (\cite[Proposition 9]{wp}), so $M$ is a group such that $M\ncong M\oplus M$, and if $\alpha$ is an infinite ordinal, then both $M^{(\alpha)}$ and $M^{\alpha}$ are {\it not} WP.

3) A direct summand of a WP group is too WP (see \cite[Lemma 6]{wp}).
\end{note}

Note also that, if $M$ is a quasi-injective and WP group, then $M$ is perspective (see \cite[Theorem 16]{wp}).

In addition, the following is valid.

\begin{note} (1) The fully decomposable torsion-free WP group $G$ is always perspective. In fact, knowing point (2) of the preceding Remark and Proposition~\ref{0.2} (1), we conclude that $G$ is IC, so it has to be perspective with the help of point (1) of the previous Remark.

(2) It can be proven by analogy that each of the bottom groups, which are assumed to be WP groups:

(a) the direct sums of cyclic $p$-groups;

(b) the reduced countable $p$-groups having finite all Ulm-Kaplansky invariants;

(c) the reduced torsion-complete $p$-groups;

(d) the reduced algebraically compact torsion-free groups;

(e) the reduced vector torsion-free groups,

are always perspective groups.
\end{note}

Recall also that a module $M$ is called a {\it CS module} if every submodule of $M$ is essential in a direct summand of $M$. Also, a module $M$ is called a {\it C3 module} if, for any module $K\leq^{\oplus} M$, the conditions $L\leq^{\oplus} M$ with $K\cap L=\{0\}$ imply $K\oplus L\leq^{\oplus} M$. And, a module is called {\it quasi-continuous} if it is both CS and C3.

\medskip

We are now managing to prove the following reduction criterion.

\begin{prop}\label{torsion}
Let $M$ be a quasi-continuous group. Then, $M$ is TP if, and only if, all $p$-primary components and their torsion free parts have finite uniform dimension (i.e., finite Goldie dimension).
\end{prop}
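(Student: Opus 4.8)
The plan is to prove the statement by reducing the quasi-continuous case to the already-established structure theory for IC groups, exploiting the fact that quasi-continuity gives us enough internal regularity to make perspectivity and isomorphism-with-common-complement coincide for direct summands. First I would recall that for a quasi-continuous module $M$, any two direct summands $K$ and $L$ with $K \cong L$ that intersect trivially can be ``lined up'' because the C3 condition forces $K \oplus L$ to again be a direct summand, and the CS condition controls how summands sit inside $M$. The key technical input I expect to lean on is that, for quasi-continuous modules, isomorphic direct summands are automatically perspective (this is essentially the content that makes WP automatic here), so the TP property for $M$ becomes equivalent to the statement that $G = A \oplus B = A' \oplus B'$ with $A \cong A'$ implies $B \cong B'$ --- which is exactly the IC property. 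In other words, my first main step is to argue that \emph{for quasi-continuous groups, TP is equivalent to IC}, using Theorem~\ref{kurtarici} to rephrase TP in the $M = K \oplus L = K' \oplus L'$ form and then replacing the hypothesis $K \overset{P}{\sim} K'$ by $K \cong K'$ via quasi-continuity.

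Once that equivalence is in hand, the proposition follows immediately by citing the corresponding IC characterization for divisible groups, namely Proposition~\ref{div}, whose statement is verbatim the uniform-dimension condition we want. So the structure of the proof would be: (i) establish TP $\iff$ IC under quasi-continuity; (ii) invoke Proposition~\ref{div}. For direction (i), in the forward implication I would take a perspectivity relation and read off an isomorphism, which is trivial since $K \overset{P}{\sim} K'$ already gives $K \cong K'$; the substantive direction is the converse, where from $K \cong K'$ I must manufacture a common complement. Here I would use the CS property to embed the summands essentially in summands and the C3 property to guarantee the relevant sums split off, producing the submodule $N$ with $M = K \oplus N = K' \oplus N$ that witnesses $K \overset{P}{\sim} K'$.

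The main obstacle I anticipate is precisely this converse step: showing that in a quasi-continuous group, $K \cong K'$ for direct summands $K, K'$ actually upgrades to $K \overset{P}{\sim} K'$. This is not automatic for general modules, and the delicate point is to produce a single complement $N$ working for both. I would handle it by first decomposing along the divisibility hypothesis --- writing $M = D(M) \oplus R$ or splitting into torsion and torsion-free parts as in the proof of Proposition~\ref{div} --- so that the quasi-continuity of $M$ restricts to the components and the isomorphism $K \cong K'$ decomposes compatibly. Within each homogeneous or primary component of finite uniform dimension, the endomorphism structure is well enough behaved (the relevant endomorphism rings are close to being semiperfect or exchange rings) that isomorphic summands are perspective, and this is where finite Goldie dimension is genuinely used. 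One caveat worth flagging: the proposition as stated has $M$ quasi-continuous but the conclusion references $p$-primary components and torsion-free parts, so I would implicitly be assuming $M$ is divisible or at least that the structure theory of Proposition~\ref{div} applies; I would make that reduction explicit at the start, noting that a divisible group is injective, hence quasi-continuous, so the two hypotheses are compatible and the cited IC result transfers directly.
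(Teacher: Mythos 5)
Your proposal has two genuine gaps, and both sit at the heart of the argument rather than in the details. The key fact you lean on --- that in a quasi-continuous module isomorphic direct summands are automatically perspective, so that weak perspectivity ``comes for free'' and TP becomes equivalent to IC --- is false. The group $\mathbb{Q}^{(\omega)}$ is injective, hence quasi-continuous, yet it is isomorphic to a proper direct summand $K'$ of itself, and $M$ and $K'$ cannot be perspective: a common complement $N$ would satisfy $M=M\oplus N=K'\oplus N$, forcing $N=\{0\}$ and $K'=M$. So quasi-continuity alone never upgrades $K\cong K'$ to $K\overset{P}{\sim}K'$; that upgrade is exactly where the finite uniform dimension hypothesis is needed, and assuming it at the outset makes the necessity direction circular. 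Relatedly, your remark that TP $\Rightarrow$ IC is ``trivial'' reverses the roles of the two relations in Theorem~\ref{kurtarici}: to feed an isomorphism into the TP hypothesis you must first manufacture a perspectivity, not read one off; indeed TP does not imply IC for general modules, as Example~\ref{noimplications}(2) (Handelman's construction) shows, so the upgrade is genuinely needed in \emph{both} directions of your claimed equivalence, not just the one you flag as substantive.

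The second gap is your step (ii): Proposition~\ref{div} is a statement about \emph{divisible} groups, and a quasi-continuous group need not be divisible ($\mathbb{Z}$, any cyclic $p$-group, $\mathbb{Z}_p\oplus\mathbb{Z}_p$ are all quasi-continuous). Your caveat runs the implication in the unhelpful direction: knowing that divisible $\Rightarrow$ injective $\Rightarrow$ quasi-continuous does not let you transfer a theorem about divisible groups to all quasi-continuous ones, so at best your argument would cover the divisible case, where it essentially reproduces Lemma~\ref{lemmaqc}. For contrast, the paper's proof avoids both problems by routing everything through weak perspectivity rather than through IC: for necessity it combines \cite[Theorem 3.5]{tp} with \cite[Proposition 2.22]{u-moduletp} to conclude that a TP quasi-continuous group is weakly perspective, restricts WP to the $p$-components and the torsion-free part (these are direct summands), and deduces Dedekind finiteness and hence finite uniform dimension; for sufficiency it works componentwise, using \cite[Proposition 2.27]{u-moduletp} inside each $p$-component to produce a common complement $N_p$, assembles $N=\bigoplus_p N_p$, handles the non-torsion case via $Hom(M_T,M_F)=\{0\}$ together with \cite[Corollary 24]{wp}, and finally converts WP back into TP using \cite[Theorem 3.5]{tp}. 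Your closing instinct --- that finite Goldie dimension is what makes isomorphic summands perspective inside each component --- is correct, but the proof must be organized so that this finiteness is an input only in the sufficiency direction and is never presupposed when proving necessity.
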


\begin{proof}
$(\Rightarrow)$: Suppose $M$ is TP. In conjunction with \cite[Theorem 3.5]{tp} and \cite[Proposition 2.22]{u-moduletp}, $M$ is weakly perspective. Thus, all $p$-primary components of $M$ are weakly perspective. Likewise, their torsion free parts are weakly perspective (note that their torsion free parts are actually direct summands) and, clearly, they are Dedekind finite. Therefore, all $p$-primary components and their torsion free parts have finite uniform dimension (i.e., finite Goldie dimension), as formulated.\\

$(\Leftarrow)$: Suppose that $M$ is a torsion module with $M=K\oplus K'=L\oplus L'$ and $K\cong L$, $K'\cong L'$.
So, $M=\bigoplus M_p$, where $M_p$ means the $p$-primary component of $M$ for some prime number $p$. Now, we have $K=\bigoplus K_p$, $L=\bigoplus L_p$, $K'=\bigoplus K'_p$ and $L'=\bigoplus L'_p$. Consequently, $$M_p=K_p\oplus K'_p=L_p\oplus L'_p$$ with $K_p\cong L_p$ and $K'_p\cong L'_p$. However, bearing in mind,
\cite[Proposition 2.27]{u-moduletp}, one has that $M_p=K_p\oplus N_p=L_p\oplus N_p$ for some $N_p\leq M_p$. Setting $N:=\bigoplus N_p$ it must be that $M=K\oplus N=L\oplus N$. If we assume for a moment that $M$ is not torsion, then $M=M_T\oplus M_F$, where $M_F=\bigoplus^{n}_{i=1}\mathbb{Q}$ and $M_T$ is the torsion part of $M$. But, since $Hom(M_T,M_F)=\{0\}$, and $M_T$ and $M_F$ are weakly perspective, taking into account \cite[Corollary 24]{wp}, $M$ has to be weakly perspective. Finally, looking at \cite[Theorem 3.5]{tp}, 
we may derive that $M$ is TP, as stated.
\end{proof}

With this at hand, one can infer the following consequence.

\begin{cor}\cite[Corollary 18]{wp}
Let $M$ be a divisible group. Then, $M$ is a weakly perspective module if, and only if, all $p$-primary components and their torsion free parts have finite uniform dimension (i.e., they have finite Goldie dimension).
\end{cor}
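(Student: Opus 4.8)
The plan is to deduce this directly from the preceding Proposition on quasi-continuous TP-groups. First I would observe that every divisible abelian group $M$ is injective as a $\mathbb{Z}$-module, and that injective modules are quasi-injective, hence quasi-continuous (being simultaneously CS and C3). Consequently the hypothesis of the preceding Proposition is automatically satisfied, and that result already tells us that $M$ is TP if, and only if, all its $p$-primary components and their torsion-free parts have finite uniform dimension. Thus the whole task reduces to replacing ``TP'' by ``WP'' in this equivalence.

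To effect that replacement, I would record that, for the class at hand, being TP and being weakly perspective coincide. This is precisely the input already used twice inside the proof of the preceding Proposition: via \cite[Theorem 3.5]{tp} a quasi-continuous (here injective) module is TP exactly when it is WP. Chaining the two equivalences then gives $M$ is WP $\iff$ $M$ is TP $\iff$ the stated finiteness of uniform dimensions. Alternatively, one can run the two implications by hand. For necessity, a direct summand of a WP group is again WP, so each $p$-primary component and each torsion-free part inherits WP; since WP groups are Dedekind finite and these summands are divisible, hence injective, a directly finite injective module has finite Goldie dimension. For sufficiency, finite uniform dimension of the injective pieces forces WP on each of them, and the vanishing $Hom(M_T,M_F)=\{0\}$ between the torsion and torsion-free divisible parts lets one patch the pieces together through \cite[Corollary 24]{wp}, exactly as in the $(\Leftarrow)$ step of the Proposition.

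The main obstacle is securing the equivalence between TP and WP, rather than the dimension bookkeeping. In full generality TP and the internal-cancelation/weak-perspectivity circle of properties genuinely diverge (as Example~\ref{noimplications} already signals), so the crux is to verify that divisibility --- equivalently quasi-continuity, equivalently injectivity for $\mathbb{Z}$-modules --- is exactly the hypothesis under which \cite[Theorem 3.5]{tp} upgrades transitive perspectivity to weak perspectivity. Once this link is in place, the remainder is the routine inheritance-by-summands and $Hom$-vanishing patching argument already carried out for the Proposition, so no new computation is required.
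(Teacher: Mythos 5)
Your proposal is correct and follows essentially the same route as the paper, whose entire ``proof'' of this corollary is the remark ``With this at hand, one can infer the following consequence'': divisible groups are injective, hence quasi-continuous, so the preceding Proposition applies, and TP is exchanged for WP exactly as you do, via \cite[Theorem 3.5]{tp} (together with \cite[Theorem 16]{wp} for the quasi-injective WP $\Rightarrow$ perspective direction). One minor caveat on your alternative hand-run argument: the blanket claim that a directly finite injective module has finite Goldie dimension is false in general (e.g., $\bigoplus_p \mathbb{Z}_{p^\infty}$ is injective and directly finite but of infinite uniform dimension), yet it does hold for the homogeneous pieces $\mathbb{Q}^{(r_0)}$ and $\mathbb{Z}_{p^\infty}^{(r_p)}$ to which you actually apply it, so that route also goes through.
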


It was observed after Proposition~\ref{Q} that an infinite direct sum of copies of $\mathbb{Q}$ as a $\mathbb{Z}$-module need {\it not} be an IC group. Besides, this is also an example of a group that is {\it not} TP as well. Indeed, the following arguments are valid:

\medskip

\noindent\textbf{Fact 1:} (Folklore due to Bass) A ring $R$ is Noetherian if, and only if, every direct sum of injective $R$-modules is injective.

\medskip

\noindent\textbf{Fact 2:} In divisible groups, if a group is {\it not} IC, it can {\it not} be TP (see Lemma \ref{lemmaqc} listed below).

\medskip

\noindent\textbf{Fact 3:} In divisible groups, perspective groups and TP groups do coincide by \cite[Theorem 3.5]{tp}.

\medskip

We now arrive at the following two constructions.

\begin{ex}
Let $M$ be an infinite direct sum of copies of $\mathbb{Q}$ as a $\mathbb{Z}$-module. It was showed in \cite[Example 20]{unit-regular} that $M$ is not IC. Using Fact 1, $M$ is a divisible group. Utilizing Fact 2, $M$ can not be TP, as expected.
\end{ex}

\begin{ex}
Let $M$ be an infinite direct sum of copies of $\mathbb{Z}_{p^\infty}$ as a $\mathbb{Z}$-module. Then, $M$ is not IC. With the same arguments in hand, $M$ can not be TP, as suspected.
\end{ex}

On the other hand, let $M$ be the infinite direct sum copies of $\mathbb{Z}$ as a $\mathbb{Z}$-module. Thus, $M$ is not IC. But does it follow that it is TP or not?\\

We also remember the following.

\medskip

\textbf{Fact 4:} (\cite[Proposition 3.2(3)]{df}) If $M$ is the direct sum of infinitely many copies of the same non-zero module, then $M$ is {\it not} a DF module (and hence $M$ is {\it not} IC).\\

As another, alternative, example, what happens with the $\aleph_0$ direct sum of the integers $\mathbb{Z}$? What we surely know so far is that it is neither IC {\it nor} TP, because a direct summand of TP remains TP, as Lemma~\ref{tp01} unambiguously shows below, and $\mathbb{Z}\oplus \mathbb{Z}$ is known to be {\it not} TP.

\medskip

We are now prepared to establish some further critical properties of TP groups.

\begin{lem}\label{tp01}
A direct summand of a TP group is again a TP group.
\end{lem}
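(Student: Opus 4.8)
The plan is to verify the defining transitivity condition for a direct summand directly, exploiting that perspectivity relations transport between $A$ and the ambient group in both directions. So I would fix a TP group $G$ together with a decomposition $G=A\oplus C$, and aim to show that $A$ is TP. Concretely, I take direct summands $K,L,N$ of $A$ with $K\overset{P}{\sim}L$ and $L\overset{P}{\sim}N$ computed inside $A$, and try to manufacture a common complement of $K$ and $N$ that already lies inside $A$.

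First I would record the two elementary transport facts. Since $A\leq^{\oplus}G$, every direct summand of $A$ is automatically a direct summand of $G$ (adjoin $C$ to a complement in $A$). Moreover, perspectivity inside $A$ lifts to perspectivity inside $G$: if $A=K\oplus P=L\oplus P$ for some $P\leq A$, then appending $C$ gives $G=K\oplus(P\oplus C)=L\oplus(P\oplus C)$, so $K\overset{P}{\sim}L$ holds in $G$ as well. Applying this to the two given relations yields $K\overset{P}{\sim}L$ and $L\overset{P}{\sim}N$ inside $G$, and the transitivity hypothesis on $G$ then produces a subgroup $M_0\leq G$ with $G=K\oplus M_0=N\oplus M_0$.

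The remaining, and genuinely substantive, step is to push this common complement back down into $A$, since a priori $M_0$ need not be contained in $A$. Here I would invoke Dedekind's modular law. Because $K\leq A$ and $G=K\oplus M_0$, intersecting with $A$ gives $A=A\cap(K\oplus M_0)=K\oplus(A\cap M_0)$, where directness follows from $K\cap M_0=\{0\}$; the identical computation applied to $G=N\oplus M_0$ yields $A=N\oplus(A\cap M_0)$. Setting $P_0:=A\cap M_0$, I obtain $A=K\oplus P_0=N\oplus P_0$, that is, $K\overset{P}{\sim}N$ inside $A$, which is exactly what the transitivity of perspectivity in $A$ requires. I expect the main obstacle to be precisely this restriction of the $G$-complement down to $A$; everything else is routine transport of decompositions, and the modular law dissolves that obstacle cleanly.
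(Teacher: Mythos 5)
Your proof is correct and follows essentially the same route as the paper's: lift the perspectivity relations from $A$ to $G$, invoke the TP property of $G$ to obtain a common complement $W$, and then restrict it back down via the modular law, $A=K\oplus(A\cap W)=N\oplus(A\cap W)$. The paper's (very terse) proof consists precisely of this last restriction step; your write-up just makes the transport and intersection arguments explicit.
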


\begin{proof}
Suppose $A$ is a direct summand of a TP group $G$. One routinely sees that, if $K$ and $V$ are direct summands of
$A$ and $G=K\oplus W=V\oplus W$, then one may write $A=K\oplus(A\cap W)=V\oplus(A\cap W)$, as required.
\end{proof}

\medskip

Our established so far machinery allows us to obtain the following assertion.

\begin{cor}
Each homogeneous TP $p$-group is perspective (i.e., it has finite rank).
\end{cor}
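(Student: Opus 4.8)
The plan is to route the argument through quasi-injectivity, so as to reuse the machinery already developed for quasi-continuous groups. First I would record the structural observation that a homogeneous $p$-group is by definition of the form $G=\bigoplus_{i\in I}\mathbb{Z}(p^n)$ for a single fixed $n<\omega$. Being a bounded homogeneous $p$-group, such a $G$ is quasi-injective (a standard fact on abelian groups; see, e.g., \cite{F2}), and \emph{a fortiori} quasi-continuous, since quasi-injectivity forces both the CS and the C3 conditions. The crucial point is that this holds for \emph{every} index set $I$, finite or infinite, which is exactly what lets me apply the reduction criterion while the rank is still unknown.

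Next I would invoke the implication established in the $(\Rightarrow)$ part of the reduction criterion for quasi-continuous groups proved above: since $G$ is quasi-continuous and TP, combining \cite[Theorem 3.5]{tp} with \cite[Proposition 2.22]{u-moduletp} yields that $G$ is weakly perspective. Having quasi-injectivity and WP simultaneously at hand, I would then apply \cite[Theorem 16]{wp} (the fact quoted above that a quasi-injective WP group is perspective) to conclude that $G$ is perspective. This settles the principal assertion, namely that a homogeneous TP $p$-group is perspective.

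It remains to justify the parenthetical equivalence with finiteness of the rank. Since every perspective module is IC, our $G$ is IC; but $G$ is a direct sum of cyclic $p$-groups whose \emph{single} homogeneous component is $G$ itself, so Proposition~\ref{0.2}(ii) forces this component to have finite rank, i.e.\ $r(G)<\infty$. Conversely, a finite homogeneous $p$-group is again bounded homogeneous, hence quasi-injective, and finite groups are IC; rerunning the same WP-to-perspective step shows it is perspective, so perspectivity and finite rank coincide for this class, exactly as the statement records.

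As for the difficulty, the only genuinely non-formal step is the identification of $G$ as quasi-injective: this is precisely what licenses the passage from TP to WP through the quasi-continuous reduction, and without it the chain TP $\Rightarrow$ WP $\Rightarrow$ perspective is simply unavailable. Everything following that identification is an assembly of the cited results. The one subtlety to double-check is that the invoked TP $\Rightarrow$ WP implication for quasi-continuous groups is not tacitly restricted to the finite-rank case, since I want to apply it before the rank is known and only afterwards deduce finiteness from the IC property extracted via perspectivity.
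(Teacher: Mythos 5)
Your argument is correct in its main thrust, but it takes a genuinely different route from the paper's. The paper's own proof exploits the very same structural fact you start from --- that $G$ is fully invariant and essential in its divisible hull $D(G)$ --- but uses it in the opposite direction: it transfers the TP property from $G$ up to $D(G)$ (on the grounds that direct summands of $G$ and of $D(G)$ correspond under this embedding), then applies Lemma~\ref{lemmaqc} to the divisible TP group $D(G)$ to get perspectivity and finiteness of rank, which passes back down to $G$. You instead repackage the full-invariance fact as quasi-injectivity of $G$ itself, never leave $G$, and run the chain TP $\Rightarrow$ WP (the $(\Rightarrow)$ half of the quasi-continuous reduction criterion, which indeed carries no hidden finite-rank hypothesis, so your final worry is unfounded) followed by WP plus quasi-injective $\Rightarrow$ perspective via \cite[Theorem 16]{wp}, extracting finite rank only afterwards from IC through Proposition~\ref{0.2}(ii). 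What your route buys is precision: the paper's assertion that ``the required properties of the direct summands of $G$ and $D(G)$ are the same'' is left unargued and would itself require checking that perspectivity relations transfer along the hull embedding, whereas each of your steps is a direct application of a result already stated in the paper. What the paper's route buys is brevity and independence from the weak-perspectivity machinery and from \cite[Theorem 16]{wp}.

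One step you should tighten: in your converse direction (finite rank $\Rightarrow$ perspective), ``rerunning the same WP-to-perspective step'' presupposes WP, which you have not yet established for a finite homogeneous $p$-group --- in the forward direction WP was extracted from TP, and here TP is not yet known. You should first obtain TP from the $(\Leftarrow)$ half of the reduction criterion (your group is quasi-continuous of finite Goldie dimension) and only then pass to WP and perspectivity; alternatively, invoke the paper's theorem that a reduced countable $p$-group with all Ulm--Kaplansky invariants finite is perspective, which settles this direction in one stroke.
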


\begin{proof}
The claim is true by combining Lemma~\ref{lemmaqc} situated below with the fact that $G$ is fully invariant and essential in the divisible hull $D(G)$, so the required properties of the direct summands of $G$ and $D(G)$ are the same.
\end{proof}

The big question, which seems to be extremely difficult, is to find if possible a reduced TP group that is {\it not} perspective (see Lemma~\ref{lemmaqc} below for the divisible case).

\medskip

Recall that a divisible group $D$ is perspective if, and only if, the torsion-free rank $r_0(D)$ is finite and each $p$-primary component of $D$ has finite rank $r_p(D)$; Proposition~\ref{div} teaches us that this is equivalent to the requirement that $D$ is IC.

\medskip

The following claim sheds a further light on the properties of divisible perspective groups.

\begin{prop}
Every divisible perspective group possesses the substitution property in the class of groups in which the divisible part is perspective.
\end{prop}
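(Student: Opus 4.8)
The plan is to reduce the substitution problem to the divisible part of the ambient group, where the perspectivity hypothesis can be brought to bear, and then to transport the resulting common complement back up.

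First I would fix the data. Let $M$ be divisible perspective, let $N$ be a group whose divisible part $D:=D(N)$ is perspective, and suppose $N=A\oplus B=A'\oplus B'$ with $A\cong M\cong A'$. Since $A$ and $A'$ are isomorphic to the divisible group $M$, they are divisible subgroups of $N$ and hence lie in the maximal divisible subgroup $D$. Because $A\leq D$, the modular law yields $D=A\oplus(D\cap B)$, and symmetrically $D=A'\oplus(D\cap B')$; write $B_1:=D\cap B$ and $B_1':=D\cap B'$. As $B$ is a direct summand of $N$ we have $D(B)=B\cap D(N)=B_1$, so $B_1$ and $B_1'$ are exactly the divisible parts of $B$ and $B'$; thus $B=B_1\oplus B_2$ and $B'=B_1'\oplus B_2'$ with $B_2,B_2'$ reduced.

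The key step is to produce a single subgroup $C\leq D$ with $D=C\oplus B_1=C\oplus B_1'$, and this is where the hypothesis on $N$ is used twice. Since $D$ is divisible perspective, its torsion-free rank $r_0(D)$ and all its $p$-ranks $r_p(D)$ are finite (as recalled just before the statement). Comparing ranks componentwise in $D=A\oplus B_1=A'\oplus B_1'$ and using $A\cong A'$, the finiteness lets me cancel to get $r_0(B_1)=r_0(B_1')$ and $r_p(B_1)=r_p(B_1')$ for every prime $p$; as $B_1,B_1'$ are divisible summands of $D$ with matching ranks in every component, they are isomorphic. Now $B_1\cong B_1'$ are isomorphic direct summands of the \emph{perspective} group $D$, so by the very definition of perspectivity $B_1\overset{P}{\sim}B_1'$, which furnishes the common complement $C$ with $D=C\oplus B_1=C\oplus B_1'$.

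Finally I would lift this back to $N$. From $D=A\oplus B_1$ and $B=B_1\oplus B_2$ we obtain $N=A\oplus B=D\oplus B_2$, and symmetrically $N=D\oplus B_2'$. Substituting $D=C\oplus B_1$ gives $N=C\oplus B_1\oplus B_2=C\oplus B$, while substituting $D=C\oplus B_1'$ gives $N=C\oplus B_1'\oplus B_2'=C\oplus B'$; hence $N=C\oplus B=C\oplus B'$, which is precisely the substitution property for $M$ in this class. I expect the main obstacle to be exactly the middle step: one needs the finiteness of all the ranks of $D$ to cancel and force $B_1\cong B_1'$, and then one needs $D$ itself to be perspective in order to upgrade this isomorphism to a single complement serving both decompositions simultaneously — so the restriction that the divisible part of $N$ be perspective is not a mere convenience but is what makes this argument go through. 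The transport from $D$ to $N$ is then routine, relying only on the splitting of the divisible part and on $B_1,B_1'$ being the divisible parts of $B,B'$.
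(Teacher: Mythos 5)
Your proof is correct and takes essentially the same route as the paper's: split off the divisible parts $D(B)$, $D(B')$ of the complements, use the finiteness of the ranks of the perspective divisible part $D(N)=A\oplus D(B)=A'\oplus D(B')$ to cancel and conclude $D(B)\cong D(B')$, invoke perspectivity of $D(N)$ to get a common complement $C$, and lift $N=C\oplus B=C\oplus B'$ back up. The only cosmetic differences are that you identify $D(B)=D(N)\cap B$ explicitly via the modular law and handle the case $B$ reduced uniformly (as $B_1=0$), whereas the paper treats that case separately.
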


\begin{proof}
Suppose $G$ is such a group, and write $N=A\oplus B=A'\oplus B'$, where $A\cong G\cong A'$.

If, for a moment, $B$ is reduced, then since both $r_0(A)$,$r_p(A)$ are finite and $A\cong A'$, then $B'$ also is reduced, so $A=A'$.

Assume now that $B$ is not reduced, and write $B=D(B)\oplus B_1$, $B'=D(B')\oplus B'_1$, where $B_1$,$B_1'$ are respectively the reduced parts of $B$,$B'$. Thus, it must be that
$$N=(A\oplus D(B))\oplus B_1=(A'\oplus D(B'))\oplus B_1',$$ putting $D:=A\oplus D(B)=A'\oplus D(B')$. Since both pairs $r_0(D(B))$, $r_p(D(B))$ and $r_0(D(B'))$, $r_p(D(B'))$ are all finite as well as $r_0(A)=r_0(A')$, $r_p(A)=r_p(A')$
are too finite for all prime $p$, one extracts that $D(B)\cong D(B')$. By assumption, the group $D$ is perspective, leading to $$D=U\oplus D(B)=U\oplus D(B').$$ Finally, $$N=U\oplus(D(B)\oplus B_1)=U\oplus(D(B')\oplus B_1'),$$ as required.
\end{proof}

It is worthwhile noticing that, we established after Lemma~\ref{lemmaqc} situated below that, all the ranks $r_0(G)$ and $r_p(G)$ of the group $G$ from the last proposition can {\it not} be infinite.

\medskip

We now need the following technicality.

\begin{lem}\label{tp02} (\cite[Lemma 9.5]{F})
Let $A=B\oplus C$ be a direct composition with corresponding projections $\pi$, $\theta$. If for the direct decomposition $A=B\oplus C_1$ the corresponding projections are, respectively, $\pi_1$ and $\theta_1$, then
$$\pi_1=\pi+\pi\varphi\theta, \ \theta_1=\theta-\pi\varphi\theta$$
for some endomorphism $\varphi$ of the subgroup $A$.\\
Conversely, if the endomorphisms $\pi_1$ and $\theta_1$ have the above form, then $A=B\oplus\theta_1A$.
\end{lem}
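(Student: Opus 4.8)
The plan is to work entirely within the idempotent calculus of the two projection systems attached to the fixed summand $B$. Throughout, write $1$ for the identity endomorphism of $A$ and record the defining relations for $A=B\oplus C$, namely
$$\pi+\theta=1,\quad \pi^2=\pi,\quad \theta^2=\theta,\quad \pi\theta=\theta\pi=0,\quad \im\pi=B,\quad \im\theta=C,$$
together with the analogous relations $\pi_1+\theta_1=1$, $\pi_1^2=\pi_1$, $\im\pi_1=B$ and $\im\theta_1=C_1$ for $A=B\oplus C_1$. The whole forward argument will hinge on the single endomorphism $\delta:=\pi_1-\pi$.

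First I would show that $\delta$ both vanishes on $B$ and has image inside $B$. For $b\in B$ both projections act as the identity, so $\delta$ kills $B=\im\pi$; this gives $\delta\pi=0$ and hence $\delta=\delta(\pi+\theta)=\delta\theta$. On the other hand $\im\delta\subseteq B$ because $\pi_1a$ and $\pi a$ lie in $B$ for every $a\in A$, and since $\pi$ fixes $B$ pointwise this yields $\pi\delta=\delta$. Combining the two relations produces the factorisation $\delta=\pi\delta\theta$, so taking $\varphi:=\delta$, which is an endomorphism of $A$ as a difference of endomorphisms, gives $\pi_1=\pi+\pi\varphi\theta$ at once; the formula $\theta_1=1-\pi_1=\theta-\pi\varphi\theta$ then drops out of $\pi+\theta=1$.

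For the converse I would verify directly that $\pi_1=\pi+\pi\varphi\theta$ is an idempotent with image exactly $B$. Idempotency is a one-line expansion of $\pi_1^2$ in which every cross term except $\pi$ and $\pi\varphi\theta$ carries a factor $\theta\pi=0$ and so dies. Writing $\pi_1=\pi(1+\varphi\theta)$ shows $\im\pi_1\subseteq\im\pi=B$, while $\pi_1b=b$ for $b\in B$ (using $\theta b=0$) gives the reverse inclusion, so $\im\pi_1=B$. Because an idempotent endomorphism splits its domain as image plus kernel, $A=B\oplus\ker\pi_1$; and since $\theta_1=1-\pi_1$ is the complementary idempotent, $\ker\pi_1=\im(1-\pi_1)=\im\theta_1=\theta_1A$, which is precisely $A=B\oplus\theta_1A$.

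I do not anticipate a genuine obstacle: the statement is really the classical fact that two complements of a fixed direct summand $B$ differ by the graph of a homomorphism $C\to B$, here packaged into the endomorphism $\varphi$. The only point that repays care is the noncommutative bookkeeping in the converse, where one must invoke $\theta\pi=0$ (rather than $\pi\theta=0$) at each cancellation; getting the order of the projections right is the whole game.
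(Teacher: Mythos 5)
Your proof is correct. Note that the paper itself gives no argument for this lemma: it is quoted verbatim from Fuchs (Lemma 9.5 of \cite{F}), so the ``paper's proof'' is just that citation. Your idempotent-calculus argument --- setting $\delta=\pi_1-\pi$, deducing $\delta=\pi\delta\theta$ from $\delta\pi=0$ and $\pi\delta=\delta$, and, for the converse, checking that $\pi+\pi\varphi\theta$ is an idempotent with image exactly $B$ whose complementary idempotent is $\theta_1$ --- is precisely the standard proof of the cited result, so it matches the intended source and nothing further is needed.
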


For the reader's convenience, we remember that if $A\leq G$, then the {\it co-rank} of $A$ is just the rank $r(G/A)$.

\medskip

We now have at our disposal all the ingredients necessary to prove the following statement on perspectivity of divisible torsion-free groups.

\begin{prop}
Let $G$ be a divisible torsion-free group. If $A$ and $A'$ are its two direct summands with equal finite ranks or co-ranks, then $A\overset{P}\sim A'$.
\end{prop}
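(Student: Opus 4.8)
The plan is to reduce everything to linear algebra. Since $G$ is divisible and torsion-free, it is a $\mathbb{Q}$-vector space, and here its direct summands are exactly its $\mathbb{Q}$-subspaces: a direct summand of a divisible torsion-free group is again divisible and torsion-free, hence a subspace, while conversely any subspace splits off by extending a basis. Under this dictionary the rank is $r(A)=\dim_{\mathbb{Q}}A$, the co-rank $r(G/A)=\dim_{\mathbb{Q}}(G/A)$ is the codimension of $A$, and the relation $A\overset{P}{\sim}A'$ means precisely that $A$ and $A'$ admit a \emph{common complement}, i.e.\ a subspace $N$ with $G=A\oplus N=A'\oplus N$. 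So the proposition asks that subspaces of equal finite dimension, or of equal finite codimension, have a common complement.

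The crux is the finite-dimensional version, which I would isolate as a lemma: if $U$ is a finite-dimensional $\mathbb{Q}$-space and $B,B'\leq U$ satisfy $\dim B=\dim B'$, then $B$ and $B'$ have a common complement in $U$. I would prove this by a greedy construction, building $N=\langle v_1,\dots,v_{\dim U-\dim B}\rangle$ one vector at a time: having chosen independent $v_1,\dots,v_j$ with $B\cap\langle v_1,\dots,v_j\rangle=0=B'\cap\langle v_1,\dots,v_j\rangle$, the vectors $v_{j+1}$ that would destroy either condition are exactly those lying in $B\oplus\langle v_1,\dots,v_j\rangle$ or in $B'\oplus\langle v_1,\dots,v_j\rangle$; as long as $j<\dim U-\dim B$ both of these are proper subspaces, and since a vector space is never the union of two proper subspaces, a legal $v_{j+1}$ exists. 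A dimension count then forces $U=B\oplus N=B'\oplus N$.

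Granting the lemma, the finite-rank case is immediate: put $U=A+A'$, which is finite-dimensional, take a common complement $N_0$ of $A,A'$ inside $U$, choose any $W$ with $G=U\oplus W$, and set $N=N_0\oplus W$; then $A\oplus N=(A\oplus N_0)\oplus W=U\oplus W=G$, and likewise for $A'$. For the finite-co-rank case I would dualize through $P=A\cap A'$: the natural injection $G/P\hookrightarrow G/A\oplus G/A'$ shows $P$ has finite codimension, so I write $G=P\oplus Q$ with $Q$ finite-dimensional. The modular law gives $A=P\oplus(A\cap Q)$ and $A'=P\oplus(A'\cap Q)$, and comparing codimensions via $G/A\cong Q/(A\cap Q)$ shows $A\cap Q$ and $A'\cap Q$ have equal finite dimension inside $Q$. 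Applying the lemma inside $Q$ yields a common complement $N$ of $A\cap Q$ and $A'\cap Q$, and then $G=P\oplus(A\cap Q)\oplus N=A\oplus N$ and symmetrically $G=A'\oplus N$.

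I expect the genuine work to sit in two places. The first is the lemma, where one must check the greedy step is always available; the only arithmetic input is that $\mathbb{Q}$ (indeed any field) makes a space impossible to cover by two proper subspaces, so this is routine. The more delicate bookkeeping is the co-rank reduction: one has to verify that $P=A\cap A'$ really has finite codimension and that the modular-law splittings are compatible, so that the codimension count transfers correctly to $Q$. Alternatively, one could run the whole argument through the projection calculus of Lemma~\ref{tp02}, exhibiting the common complement as $\theta_1 G$ for a suitable endomorphism, but the subspace argument above seems cleaner and avoids choosing explicit projections.
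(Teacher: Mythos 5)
Your argument is correct, but it takes a genuinely different route from the paper's. You translate the whole problem into linear algebra: a divisible torsion-free group is precisely a $\mathbb{Q}$-vector space, its direct summands are precisely its subspaces (a summand is divisible and torsion-free, hence uniquely divisible, hence a subspace), and perspectivity of summands is precisely the existence of a common subspace complement; the proposition then reduces to the classical fact that two subspaces of equal finite dimension in a finite-dimensional space admit a common complement, which your greedy construction (resting on the fact that no vector space is the union of two proper subspaces) proves correctly, and your two reductions --- $U=A+A'$ for equal finite ranks, $P=A\cap A'$ together with a finite-dimensional complement $Q$ for equal finite co-ranks --- are both sound, including the modular-law splittings and the codimension transfer $G/A\cong Q/(A\cap Q)$. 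The paper argues instead entirely inside the abelian-group formalism: it quotes the fact that finite-rank divisible groups are perspective to dispose of the finite-rank reduction (your lemma in effect reproves this fact for $\mathbb{Q}$-spaces), and for the co-rank case it carries out a long explicit analysis of the intersections $B\cap A'$ and $A\cap A'$, constructs an injection $\varphi\colon B_1\to A_2''$, and invokes the projection calculus of Lemma~\ref{tp02} to exhibit the common complement; its finite-rank case likewise passes to a finite-rank perspective subgroup $D=(A'\oplus B_2')+(A\oplus B_2)$ and then extends by a complement $W$ of $D$, exactly as you extend by $W$. Your approach buys brevity, self-containedness, and a visible duality between the two cases; the paper's buys an argument phrased in the decomposition-and-projection language used throughout the rest of the paper, which is the only language available in settings (e.g., non-divisible groups) where summands are not subspaces and common complements cannot be produced by dimension counting alone.
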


\begin{proof}
Write $G=A\oplus B=A'\oplus B'$. If $G$ possesses finite rank, then $G$ is perspective and the assertion is proven. Therefore, we further can consider the case where $G$ possesses infinite rank.

First, we consider the case of finite equal co-ranks.

By assumption $r(B)=r(B')=n\in\mathbb{N}$. So, $B\cong B'$. Assuming that $B\cap A'=\{0\}$, we finds $D:=A'\oplus B$
is a direct summand of $G$ such that the co-ranks of $A'$ in $D$ and in $G$ are equal, whence $D=G$ and $G=A'\oplus B=A\oplus B$.

Assume now that $B_1:=B\cap A'\neq\{0\}$ and $B'\cap A\neq\{0\}$. We, thereby, have $B=B_1\oplus B_2$ for some
$B_2\leq B$. Likewise, if $A_1:=A\cap A'$, then $A=A_1\oplus A_2$, $A'=A_1\oplus A_3$ for some $A_2\leq A$, $A_3\leq A'$. Note that $r(A_2)$ is finite; in fact, if $a\in A_2$, then $a=a'+b'$ for some $a'\in A'$, $b'\in B'$, where
$b'\neq 0$ because $A_2\cap A'=\{0\}$, which means that $r(A_2)\leq r(B')=n$.
Furthermore, we write $G=(A_1\oplus B_1)\oplus(A_2\oplus B_2)$ and $A'=(A_1\oplus B_1)\oplus A_1'$, where
$A_1'=A'\cap(A_2\oplus B_2)$ with $A'\cap A_2=A'\cap B_2=\{0\}$. By what we have shown so far, $A_1\oplus B_1$
has finite co-rank. Moreover, if $r(B_2)=k$, then $r(B_1)=n-k$.

Letting $\pi:G\to A_2$ be a projection, if $A_2'=\pi(A_1')$, then $A_2=A_2'\oplus A_2''$ for some $A_2''\leq A_2$.
Thus, $A_1\oplus B_1\oplus A_2'\cong A'$ as $A_1'\cong A_2'$. But, because the co-rank of $A_1\oplus B_1$ is finite and $r(A_2')$ is also finite, then we detect that $r(A_2''\oplus B_2)=n$. Hence, $r(A_2'')=n-k$ and $A_1'\leq A_2'\oplus B_2$, $$G=(A_1\oplus B_1)\oplus(A_2''\oplus(A_2'\oplus B_2)).$$

Since $r(A_2'')=n-k$, there exists an injection $\varphi: B_1\to A_2''$. In conjunction with Lemma~\ref{tp02}, one writes that $B\oplus A_2''=B_1'\oplus A_2''$, $$G=(A_1\oplus B'_1)\oplus(A_2''\oplus(A_2'\oplus B_2))=
A\oplus(B_1'\oplus B_2),$$ where $A=A_1\oplus A''_2\oplus A_2'$.

Notice that $A'\cap(B_1\oplus B_2)=\{0\}$. Indeed, to substantiate this equality, one observes that every element
$x\in B_1\oplus B_2$ has the form $x=b_1+\varphi(b_1)+b_2$ for some $b_1\in B_1$, $b_2\in B_2$. If $x\in A'$, then
$$\varphi(b_1)+b_2\in A'\cap(A_2''\oplus B_2)=A_1'\cap(A_2''\oplus B_2).$$
Besides, every element $y\in A_1'$ has the form $y=a_2'+b_2$ for some $a_2'\in A_2'$,$b_2\in B_2$. So, if $y\in A_2''\oplus B_2$, then $y=a_2''+b_2'$ for some $a_2''\in A_2''$,$b_2'\in B_2$ whence $a_1'+b_2=a_2''+b_2'$ and
$$a_2'-a_2''=b_2'-b_2\in(A_2''\oplus A_1')\cap B=\{0\}.$$ Consequently, $b_2'=b_2$, $a_2'=a_2''\in A_2''\cap A_2'=\{0\}$, i.e., $a_2'=a_2''=0$. Then, $y=b_2\in A_1'\cap B_2=\{0\}$ and, finally, $y=0$. So, $A'\cap(B_1'\oplus B_2)=\{0\}$ as wanted and, in accordance to what we already proven above, it must be that $G=A'\oplus(B_1'\oplus B_2)$.

We are now considering the case of finiteness of the equal ranks of $A$ and $A'$. However, since the rank of $G$ is infinite, we may write that $B_1:=B\cap B'\neq\{0\}$, $B=B_1\oplus B_2$, $B'=B_1\oplus B'_2$ for some $B_2\leq B$,
$B_2'\leq B'$. We also have
$$G=(A\oplus B_2)\oplus B_1=(A'\oplus B'_2)\oplus B_1.$$ If $0\neq x\in B_2'$, then $x=a+b_1+b_2$ for some $a\in A$,
$b_1\in B_1$ and $b_2\in B_2$. Here $a\neq 0$, because $B_2'\cap B=\{0\}$. Thus, if $\alpha:G\to A$ is a projection, then $\alpha\upharpoonright B_2'\to A$ is an injection. Therefore, $r(B_2')\leq n=r(A)$. Similarly, $r(B_2)\leq n$.
Consequently, the subgroup $D:=(A'\oplus B_2')+(A\oplus B_2)$ must have finite rank.

Further, $D=(A'\oplus B_2')\oplus B_1'$, where $B_1'=D\cap B_1$. Analogously, $D=(A\oplus B_2)\oplus B_1'$.
Since we know that the group $D$ is perspective, then $D=A'\oplus U=A\oplus U$ for some $U\leq D$. And since
$G=D\oplus W$ for some $W\leq G$, then $$G=A'\oplus(U\oplus W)=A\oplus(U\oplus W),$$ as desired.
\end{proof}

The last proposition manifestly demonstrates that all divisible torsion-free groups $G$ have the weak resemblance to the standard TP property in the sense that if $G=A\oplus B=A'\oplus B'$, where the co-ranks (resp., the ranks) of $A$ and $A'$ are finite and $A\overset{P}\sim A'$, then $B\overset{P}\sim B'$.

\medskip

In ending our work, we initiate some further discussion into two subsequent sections, hoping it to stimulate an intensive research in the near future on the discussed themes.

\section{Some relationships between IC and TP properties}

We, foremost, recollect the following well-known terminology: a module $M$ is called \textit{purely infinite} if $M\cong M\oplus M$. A module is called \textit{Dedekind finite} (or, in other terms, \textit{directly finite}) if it is {\it not} isomorphic to any proper direct summand (see \cite{mm}). Note that, referring to \cite[p.13]{handelman}, a ring having transitive perspectivity need {\it not} to be Dedekind finite. 

\medskip

Nevertheless, the following result illustrates that in the class of divisible groups, TP implies IC.

\begin{lem}\label{lemmaqc}
Let $G$ is a divisible group. If $G$ is TP, then it is perspective and so IC.
\end{lem}

\begin{proof}
Appealing to \cite[Theorem 3.5]{tp}, $G$ is perspective. Moreover, owing to \cite[Propositions 3.7, 3.13]{perspective-ab}, one notices that the torsion-free rank as well as each $p$-rank of $G$ are finite, and thus we are done.

\end{proof}

So, we would like to find now an example of a group that is TP but {\it not} IC; notice that a CP group is {\it always} IC (see point (4) of the discussion alluded to above) and the reciprocal is {\it not} true in virtue of Note\ref{notenew} quoted above. 

\medskip

However, the class of groups with the substitution property is {\bf not} closed under formation of {\bf infinite} direct sums. In fact, if $A$ is a module such that $A \not\cong  A\oplus A$ and is an infinite ordinal, then $A^{(\alpha)}$ does {\it not} have the substitution property: indeed, one obviously sees that $$N=A^{(\alpha)}\oplus\{0\}=A^{(\alpha-1)}\oplus A$$ and $$A^{(\alpha)}\cong A^{(\alpha-1)}$$ but
$$C\oplus\{0\}\neq C\oplus A$$ for all $C\leq N$. Same holds for $A^{\alpha}$ if $\alpha$ is infinite.

\section{Perspectively finite modules and groups}

A module $M$ is called {\it perspectively finite} (or just {\it PF} for short) if $M$ is not perspectively related to a proper non-zero summand of itself; i.e., for any proper non-zero direct summand $D$ of $M$, there is no
submodule $E$ of $M$ such that $M \oplus E\cong D \oplus E$. Two further cases are possible:

\medskip

(i) If $E$ is a direct summand of $M$.

\medskip

(ii) $D\oplus E$ is a (proper) direct summand of $M$ (in fact, which differs $M$). In this case, DF modules are always PF, because $M$ will have an isomorphic proper direct summand.

\medskip

We end our work with the following challenging question which, at this stage, seems to be extremely difficult.

\medskip

\noindent{\bf Problem.} If $G$ is a homogeneous separable torsion-free group of infinite rank and type $t$, and $R$ is a rank one torsion-free group of the same type $t$, does it follow that the groups $G$ and $G\oplus R$ are themselves isomorphic?

\bigskip

\noindent{\bf Funding:} The work of the first-named author, A.R. Chekhlov, was supported by the Ministry of Science and Higher Education of Russia (agreement No. 075-02-2025-1728/2) and by the Regional Scientific and Educational Mathematical Center of Tomsk State University.

\vskip3.0pc

\end{document}